\newenvironment{enumeratei}{\begin{enumerate}[\upshape (i)]}{\end{enumerate}}
\numberwithin{equation}{section}
\theoremstyle{plain}
 \newtheorem{theorem}{Theorem}[section]
 \newtheorem{lemma}[theorem]{Lemma}
 \newtheorem{corollary}[theorem]{Corollary}
\theoremstyle{definition}
 \newtheorem{definition}[theorem]{Definition}
\newcommand \colgammacon{\textup{(C1)}}
\newcommand \colcongamma{\textup{(C2)}}
\newcommand \pcrely {principal congruence representability}
\newcommand \labc {\textup{lab}}
\renewcommand\phi{\varphi}
\renewcommand\rho{\varrho}
\newcommand\length{\textup{length}}
\newcommand\inp{\mathfrak p}
\newcommand\inq{\mathfrak q}
\newcommand\inr{\mathfrak r}
\newcommand \Con  {\textup{Con}}
\newcommand \Prime  {\textup{Prime}}
\newcommand \erep  {\textup{erep}}
\newcommand \srep  {\textup{SRep}}
\newcommand \restrict [2] {{#1}\rceil_{\kern-1pt #2}}
\newcommand\ideal[1]{\mathord\downarrow #1}
\newcommand\filter[1]{\mathord\uparrow #1}
\newcommand\filtrof[2]{\mathord\uparrow_{\kern-2pt #1}\kern 1pt #2}
\newcommand \tuple [1] {\langle #1\rangle}
\newcommand \pair [2] {\tuple{#1,#2}}
\newcommand \ppair [2] {\tuple{#1;#2}}
\newcommand \tripl [3] {\tuple{#1,#2,#3}}
\newcommand \kvint [5] {\tuple{#1,#2,#3,#4,#5}}
\newcommand \tbf [1] {\textbf{#1}} 
\newcommand \set[1] {\{#1\}}
\newcommand \jj {\vee}
\newcommand \Aut {\textup{Aut}}
\newcommand \Princ {\textup{Princ}}
\newcommand \Intv {\textup{Intv}}
\newcommand \con {\textup{con}}
\newcommand \balpha {\boldsymbol\alpha}
\newcommand \bbeta {\boldsymbol\beta}
\newcommand \semmi [1]{}
\newcommand \preogen {\textup{quo}}
\DeclareMathOperator{\Quo}{Quo}
\newcommand\ppdn {\overset{\textup{p-dn}}\rightarrow}
\newcommand\ppup {\overset{\textup{p-up}}\rightarrow}
\newcommand \cast {C^\ast}
\newcommand \dker {\mathop{\vec{\textup{Ker}}}}
\newcommand \what [1]{\widehat #1}
\newcommand \natc {\gamma_{\textup{nat}}}
\newcommand \who [1]{\widetilde #1}
\newcommand \whg {\what\gamma}
\newcommand \late {\what 1}
\newcommand \latn {\what 0}
\newcommand \pe {\pmb{\boldsymbol 1}}
\renewcommand \epsilon{\varepsilon}
\newcommand \erel {\epsilon}
\newcommand \chc [1] {\mathsf C_{#1}}
\begin{document}
\title[Fully principal congruence representable distributive lattices]
{Characterizing fully principal congruence representable distributive lattices}

\author[G.\ Cz\'edli]{G\'abor Cz\'edli}
\email{czedli@math.u-szeged.hu}
\urladdr{http://www.math.u-szeged.hu/\textasciitilde{}czedli/}
\address{University of Szeged\\ Bolyai Institute\\Szeged,
Aradi v\'ertan\'uk tere 1\\ Hungary 6720}

\thanks{This research was supported by
NFSR of Hungary (OTKA), grant number K 115518}
\begin{abstract} Motivated by a recent  paper of G.\ Gr\"atzer, a finite distributive lattice $D$ is \emph{fully principal congruence representable} if 
 for every subset $Q$ of $D$ containing $0$, $1$, and the set $J(D)$ of nonzero join-irreducible elements of $D$, there exists a finite lattice $L$ and an isomorphism from the congruence lattice of $L$ onto $D$ such that $Q$ corresponds to the set of principal congruences of $L$ under this isomorphism. Based on earlier results of G.\ Gr\"atzer, H.\ Lakser, and the present author, we prove that a finite distributive lattice $D$ is  fully principal congruence representable if and only if it is planar and it has at most one join-reducible coatom. Furthermore, even the automorphism group of $L$ can arbitrarily be stipulated in this case. 
Also, we generalize a recent result of G.\ Gr\"atzer on principal congruence representable subsets of a distributive lattice whose top element is join-irreducible by proving that the automorphism group of the lattice we construct can be  arbitrary.
\end{abstract}

\subjclass {06B10{{\hfill \color{red} June 11, 2017\color{black}}}}
\keywords{Distributive lattice, principal lattice congruence, congruence lattice,  chain-representability, simultaneous representation, automorphism group.}

\maketitle
\section{Introduction and our main goal}
Unless otherwise specified explicitly, 
all lattices in this paper are assumed to be finite, even if this is not repeated all the time. 
For a finite lattice $L$,  $J(L)$ denotes the ordered set of nonzero join-irreducible elements of $L$, $J_0(L)$ stands for $J(L)\cup\set0$, and we let $J^+(L)=J(L)\cup\set{0,1}$. 
Also, $\Princ(L)$ denotes the \emph{ordered set of all principal congruences} of $L$;  it is a subset of the \emph{congruence lattice} $\Con(L)$ of $L$ and a superset of $J^+(\Con(L))$. It is well known that 
$\Con(L)$ is distributive. These facts motivate the following concept, which is due to Gr\"atzer~\cite{ggwith1} and Gr\"atzer and Lakser~\cite{gGhL17}.

\begin{definition} Let $D$ be a finite distributive lattice. A subset $Q\subseteq D$ or, to be more precise, the pair $\pair QD$  is \emph{principal congruence representable} if there exist a finite lattice $L$ and an isomorphism $\phi\colon\Con(L)\to D$ such that $Q=\phi(\Princ(L))$. We say that $D$
 is \emph{fully principal congruence representable} if all subsets $Q$ of $D$ with $J^+(D)\subseteq Q$ are principal congruence representable.
\end{definition}

We introduce a seemingly stronger property of $D$ as follows. The \emph{automorphism group} of a lattice $L$ will be denoted by $\Aut(L)$.

\begin{definition} A  finite distributive lattice $D$ is 
\begin{equation}
\parbox{6cm}{\emph{fully principal congruence  representable with arbitrary automorphism groups,}}
\label{eqtxtfpcrDG}
\end{equation} 
in short, \emph{\eqref{eqtxtfpcrDG}-representable}, 
if for each subset $Q$ of $D$ such that $J^+(D)\subseteq Q$ and for any finite group $G$ such that $|D|=1\Rightarrow |G|=1$, there exist a finite lattice $L$ and an isomorphism $\phi\colon\Con(L)\to D$ such that $Q=\phi(\Princ(L))$ and $\Aut(L)$ is isomorphic to $G$.
\end{definition}

For more about full \pcrely, see 
Gr\"atzer~\cite{ggwith1},  Gr\"atzer and Lakser~\cite{gGhL17},  Cz\'edli, Gr\"atzer, and Lakser~\cite{czggghl}, and Cz\'edli~\cite{czgchrep}. The present paper relies  on these papers, in particular, it depends heavily on Gr\"atzer~\cite{ggwith1}. For related results on  the representability of the ordered set $Q$ as $\Princ(L)$ (without taking care of $D$), see
Cz\'edli~\cite{czgonemap}, \cite{czgaleph0}, \cite{czgprincout}, \cite{czgmanymaps}, and \cite{czgcometic} and    Gr\"atzer~\cite{gG13}, \cite{gGasm2016}, \cite{gGprincII}, and \cite{gGprincIII}.

Our main goal is to prove the following theorem. Our second target 
is to generalize the main result of Gr\"atzer~\cite{ggwith1}; see Theorem~\ref{thmmyscnD} in the present paper. Note that Theorem~\ref{thmmyscnD} will be needed to achieve the main goal.

\begin{theorem}[Main Theorem]\label{thmmain} 
If $D$ is a finite distributive lattice, then the following three conditions are equivalent.
\begin{enumeratei}
\item\label{thmmaina} $D$ is fully principal congruence  representable.
\item\label{thmmainb} $D$ is \eqref{eqtxtfpcrDG}-representable.
\item\label{thmmainc}
 $D$  is planar and it has at most one join-reducible coatom.
\end{enumeratei}
\end{theorem}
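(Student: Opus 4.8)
The plan is to prove the cycle of implications \eqref{thmmainb}$\Rightarrow$\eqref{thmmaina}$\Rightarrow$\eqref{thmmainc}$\Rightarrow$\eqref{thmmainb}, since the middle implication is trivial (a \eqref{eqtxtfpcrDG}-representation with $G$ the trivial group is in particular a principal congruence representation, and one checks the edge case $|D|=1$ separately). The real content is split into a necessity part, \eqref{thmmaina}$\Rightarrow$\eqref{thmmainc}, and a sufficiency part, \eqref{thmmainc}$\Rightarrow$\eqref{thmmainb}.

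For necessity, I would argue contrapositively. If $D$ is not planar, then by the structure theory of planar distributive lattices $D$ contains a copy of the Boolean lattice $B_3$ (equivalently, $J(D)$ contains a three-element antichain), and I would invoke the relevant result from Gr\"atzer~\cite{ggwith1} or Cz\'edli~\cite{czgchrep} showing that in this situation some admissible $Q$ (one with $J^+(D)\subseteq Q$) fails to be principal congruence representable — the obstruction being that principal congruences in a finite lattice cannot realize certain "independent join" patterns among three incomparable join-irreducibles while still hitting a prescribed $Q$. If $D$ is planar but has two distinct join-reducible coatoms $c_1,c_2$, I would similarly quote (or adapt) the known obstruction: choosing $Q=J^+(D)$, any lattice $L$ with $\Con(L)\cong D$ would be forced to have both $\con(\cdot)$ below $c_1$ and below $c_2$ non-principal in a way that contradicts the combinatorics of prime intervals in a finite lattice. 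These obstruction lemmas are exactly the kind of statement I would expect to be available from the cited earlier work, so the job here is mostly to assemble them and handle the two small cases $|D|\le 2$ and $|D|$ a chain directly.

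For sufficiency, \eqref{thmmainc}$\Rightarrow$\eqref{thmmainb}, I would split on whether the top element $1$ of $D$ is join-irreducible. If it is, then $D$ has a unique coatom, which is automatically join-reducible only in degenerate cases, and I apply the generalized form of Gr\"atzer's theorem — namely Theorem~\ref{thmmyscnD} of this paper, which hands me, for any admissible $Q$ and any finite group $G$, a finite lattice $L$ with $\Con(L)\cong D$, $\Princ(L)$ matching $Q$, and $\Aut(L)\cong G$. If $1$ is join-reducible, then the planarity plus the "at most one join-reducible coatom" hypothesis forces a rigid structural picture: $D$ is (up to the unique join-reducible coatom) built from chains, and I would reduce to the join-irreducible-top case by a gluing/ordinal-sum construction, lifting a representation of the relevant sublattice and then adjusting the top so that the new coatom structure is realized while $\Princ$ still corresponds to $Q$ and the automorphism group is untouched. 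The automorphism-group control is inherited from Theorem~\ref{thmmyscnD}: one builds the group-encoding "gadget" deep inside the part of $L$ corresponding to the join-irreducible portion of $D$, where it does not interfere with the coatom bookkeeping.

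The main obstacle I anticipate is the sufficiency direction in the join-reducible-top case: coordinating three things at once — getting $\Con(L)$ to be exactly $D$ (not just to contain the right join-irreducibles), getting $\Princ(L)$ to land on the prescribed $Q$ rather than on some larger or smaller set, and getting $\Aut(L)\cong G$ — while the presence of a join-reducible coatom removes the "single tail" flexibility that makes the join-irreducible-top construction clean. Concretely, the danger is that the gluing used to install the join-reducible coatom creates unwanted principal congruences (so $\Princ(L)\supsetneq$ the image of $Q$) or unwanted automorphisms swapping the two "halves" over that coatom. Handling this will require a careful choice of the gluing base and of asymmetric "coloured" edges that both break spurious automorphisms and keep the newly created congruences non-principal exactly when $Q$ demands it. I expect the bulk of the technical work, and essentially all the new ideas not already in \cite{ggwith1,gGhL17,czggghl,czgchrep}, to be concentrated there.
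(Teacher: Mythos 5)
Your proposal follows essentially the same route as the paper: the trivial \eqref{thmmainb}$\Rightarrow$\eqref{thmmaina}, citing Cz\'edli~\cite{czgchrep} for \eqref{thmmaina}$\Rightarrow$\eqref{thmmainc}, and proving \eqref{thmmainc}$\Rightarrow$\eqref{thmmainb} by splitting on whether $1_D\in J(D)$, invoking Theorem~\ref{thmmyscnD} in the join-irreducible case and a Hall--Dilworth gluing on top of a representation of the ideal $D'=\ideal p$ in the join-reducible case, with automorphisms controlled by inserting rigid simple lattices in place of thick edges. The only slip is your claim that $D$ is ``built from chains'' when $1_D\notin J(D)$ --- in fact only $\filter q$ is a chain while $D'=\ideal p$ is a general planar distributive lattice with join-irreducible top --- but this does not affect the viability of the plan, whose diagnosis of the delicate points (blocking spurious principal congruences across the gluing boundary and killing swap automorphisms) matches exactly what the paper does in Sections~\ref{sectprbpct}, \ref{sectdrve}, and \ref{sectlast}.
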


Combining this theorem with that of Cz\'edli~\cite{czgchrep}, we obtain the following statement; the definition of full chain-representability is postponed to the next section.

\begin{corollary} A finite distributive lattice is fully principal congruence  representable if and only if it is fully chain-representable.
\end{corollary}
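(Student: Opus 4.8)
The plan is to obtain the corollary by chaining together two characterization results that happen to share the same combinatorial hypothesis, so that no new argument is needed. On one side, Theorem~\ref{thmmain} of the present paper tells us that a finite distributive lattice $D$ is fully principal congruence representable if and only if condition~\ref{thmmainc} of that theorem holds, i.e., $D$ is planar and has at most one join-reducible coatom. On the other side, the main result of Cz\'edli~\cite{czgchrep} characterizes the fully chain-representable finite distributive lattices by exactly the same property, planarity together with at most one join-reducible coatom. Placing these two biconditionals back to back through their common middle condition gives the claimed equivalence.

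In the write-up I would first recall the definition of full chain-representability from the next section, stressing that a priori it is a genuinely different notion from full principal congruence representability; the point of the corollary is precisely that the two notions, defined along different lines, select the same class of finite distributive lattices. After that, I would quote Theorem~\ref{thmmain} for the first equivalence and the theorem of Cz\'edli~\cite{czgchrep} for the second, and simply combine them.

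I do not anticipate any real difficulty: all of the mathematical content sits in Theorem~\ref{thmmain}, which is proved in this paper, and in Cz\'edli~\cite{czgchrep}. The only point worth a single remark is to confirm that ``planar'' (for a finite distributive lattice) and ``join-reducible coatom'' are used in the same sense in both sources, so that the characterizing conditions are literally identical; once that is noted, the corollary follows in one line.
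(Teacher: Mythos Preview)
Your proposal is correct and matches the paper's approach exactly: the paper simply states that the corollary follows by combining Theorem~\ref{thmmain} with the main result of Cz\'edli~\cite{czgchrep}, both of which characterize their respective notions by the same condition (planarity with at most one join-reducible coatom). No additional argument is given or needed.
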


\subsection*{Outline} In Section~\ref{sectsecnd},  we recall the main result of  Gr\"atzer~\cite{ggwith1} as Theorem~\ref{thmggBGrs} here, and we state its generalization in Theorem~\ref{thmmyscnD}. 
In a ``proof-by-picture" way, Section~\ref{sectprbpct} explains the construction required by 
the \eqref{thmmainc} $\Rightarrow$ \eqref{thmmaina} part of 
(the Main) Theorem~\ref{thmmain}. Even if  Section~\ref{sectprbpct} contains no rigorous proofs, it can rapidly convince the reader that our construction is ``likely to work''. In Section~\ref{sectqcol}, we recall the quasi-coloring technique from Cz\'edli~\cite{czgreprect} and develop it further.
In Section~\ref{sectdrve}, armed with quasi-colorings, we derive the \eqref{thmmaina} $\iff$ \eqref{thmmainc} part of  (the Main) Theorem~\ref{thmmain}. Section~\ref{secnewproof}  contains 
the proof of Theorem~\ref{thmmyscnD}; note that our method yields an alternative proof for Theorem~\ref{thmggBGrs}, taken from  Gr\"atzer~\cite{ggwith1}.  Finally, Section~\ref{sectlast} completes the proof of Theorem~\ref{thmmain}.

\section{G.\ Gr\"atzer's theorem and our second goal}\label{sectsecnd}
A subset $Q$ of a finite distributive lattice $D$ is a \emph{candidate subset} if $J^+(D)\subseteq Q$. 
By a \emph{$J(D)$-labeled chain}\footnote{Gr\"atzer~\cite{ggwith1} uses the terminology ``$J(D)$-colored'' but here by a ``coloring'' we shall mean a particular \emph{quasi-coloring}, which goes back to Cz\'edli~\cite{czgreprect}. According to our terminology, the map in \eqref{eqpbxNmchTrlab} is not a coloring in general.} we mean a triplet $\tripl C\labc D$ such that $C$ is a finite chain, $D$ is a finite distributive lattice, and 
\begin{equation}
\parbox{9cm}{$\labc\colon \Prime(C)\to J(D)$ is a surjective map from the \emph{set $\Prime(C)$ of all prime intervals} of $C$ onto $J(D)$.}
\label{eqpbxNmchTrlab}
\end{equation}
If $\inp\in\Prime(C)$, then $\labc(\inp)$ is the \emph{label} of the edge $\inp$. 
Given a $J(D)$-labeled chain $\tripl C\labc D$, we define a map denoted by $\erep$ from the \emph{set $\Intv(C)$ of all intervals} of  $C$ onto $D$ as follows: 
for $I\in\Intv(C)$,  let 
\begin{equation}
\erep(I):=\bigvee_{\inp\in\Prime(I)} \labc(\inp)\,;
\label{eqereP}
\end{equation}
the join is taken in $D$ and $\erep(I)$ is called the \emph{element represented} by $I$.
The set 
\begin{equation}
\srep(C,\labc,D):=\set{\erep(I): I\in \Intv(C)}
\label{eqSrEp}
\end{equation}
will be called the  \emph{set represented} by the $J(D)$-labeled chain $\tripl C\labc D$. Clearly, $\srep(C,\labc,D)$ is a candidate subset of $D$ in this case.  
A candidate subset $Q$ of $D$ is said to be \emph{chain-representable} if there exists a $J(D)$-labeled chain $\tripl C\labc D$ such that $Q=\srep(C,\labc, D)$. Note that $C$ need not be a subchain of $D$.

If $1_D\in J(D)$ and  $\tripl C\labc D$ is  \emph{$J(D)$-labeled chain}, then we define
a larger \emph{$J(D)$-labeled chain}   $\tripl {\cast}{\labc^\ast} D$ as follows: 
\begin{equation}
\parbox{10cm}{
we add a new largest element $1_{\cast}$ to $C$ to obtain 
$\cast=C\cup\set{1_{\cast}}$ and we extend $\labc$ to $\labc^\ast$ such that $\labc^\ast([1_C, 1_{\cast}])=1_D$.}
 \label{eqpbxnNghTrZ}
\end{equation}  
For elements $x,y$ and a prime interval $\inp$ of a lattice $L$, $\con_L(x,y)$ and $\con_L(\inp)$ denote the \emph{congruence generated} by $\pair xy$ and $\pair{0_\inp}{1_\inp}$, respectively. The subscript  is often dropped and we write $\con(x,y)$ and $\con(\inp)$.  A lattice $L$ will be called \emph{$\set{0,1}$-separating} if  for every $x\in L\setminus\set{0,1}$, $\con(0,x)=\con(x,1)$ is $1_{\Con(L)}$, the largest congruence of $L$.
The following result is due to Gr\"atzer~\cite{ggwith1}; note that its part \eqref{thmggBGrsc} is implicit in \cite{ggwith1}, but the reader
can find it by analyzing the construction  given in  \cite{ggwith1}. For a different approach, see the proof of Theorem~\ref{thmmyscnD} here.

\begin{theorem}[Gr\"atzer~\cite{ggwith1}]\label{thmggBGrs} Let $D$ be a finite distributive lattice. If $J^+\subseteq Q\subseteq D$, then the following two statements hold.
\begin{enumeratei}
\item\label{thmggBGrsa} If $Q\subseteq D$ is principal congruence representable, then it is chain-representable.
\item\label{thmggBGrsb} If $1=1_D$ is join-irreducible and $Q\subseteq D$ is chain-representable, then $Q\subseteq D$ is principal congruence representable.
\end{enumeratei} 
Furthermore, if  $Q\subseteq D$ is chain-representable and $1_D\in J(D)$, then
\begin{enumeratei}\setcounter{enumi}{2}
\item\label{thmggBGrsc} for every  $J(D)$-labeled chain $\tripl C\labc D$ representing $Q\subseteq D$, there exist a finite $\set{0,1}$-separating  lattice $L$ and an isomorphism $\phi\colon\Con(L)\to D$ such that 
\begin{enumerate}[\upshape(a)]
\item\label{thmggBGrsca}  $\phi(\Princ(L))=  \srep(C,\labc,D)= Q$, 
\item\label{thmggBGrscb} $\cast$ is a filter of $L$, \item\label{thmggBGrscc} $\labc^\ast(\inp)=\phi(\con_L(\inp))$ holds for every $\inp \in \Prime(\cast)$, and
\item\label{thmggBGrscd} for all $x\in \cast$ and $y\in L\setminus \cast$, if $y\prec x$, then $\con_L(y,x)=1_{\Con(L)}$.
\end{enumerate}
\end{enumeratei} 
\end{theorem}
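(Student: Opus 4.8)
The plan is to prove the stronger statement in Theorem~\ref{thmggBGrs}\eqref{thmggBGrsc}, since parts \eqref{thmggBGrsa} and \eqref{thmggBGrsb} for the case $1_D\in J(D)$ follow from it (part \eqref{thmggBGrsa} is already known from \cite{gGhL17,czggghl}, and \eqref{thmggBGrsb} is the instance of \eqref{thmggBGrsc} that merely forgets the extra data about $\cast$). So fix a $J(D)$-labeled chain $\tripl C\labc D$ with $1_D\in J(D)$ and $\srep(C,\labc,D)=Q$. First I would pass to the enlarged chain $\tripl{\cast}{\labc^\ast}D$ from \eqref{eqpbxnNghTrZ}; the point of adding the top edge labeled $1_D$ is to guarantee that the congruence generated by the top prime interval is $1_{\Con(L)}$, which will later drive the $\set{0,1}$-separating property. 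The construction of $L$ will be the ``chain-to-lattice'' gadget of Gr\"atzer~\cite{ggwith1}: glue together, along the chain $\cast$, a family of small lattice pieces — one piece for each prime interval $\inp$ of $\cast$, realizing the join-irreducible $\labc^\ast(\inp)$ — in such a way that $\cast$ sits as a filter (this is exactly \eqref{thmggBGrscb}), and the congruences of the pieces propagate to give $\Con(L)\cong D$.

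The heart of the argument is the bookkeeping of congruences. I would introduce a quasi-coloring of the prime intervals of $L$ valued in $J(D)$ (in the sense recalled in Section~\ref{sectqcol}), arranged so that on $\Prime(\cast)$ it restricts to $\labc^\ast$, and so that the induced map on join-irreducible congruences is a bijection onto $J(D)$. Concretely, each building block contributes one ``new'' join-irreducible congruence; collapsing a prime interval forces the collapse of all prime intervals whose color is $\le$ its color, and nothing else. Verifying this amounts to computing $\con_L(\inp)$ for every $\inp\in\Prime(L)$ and checking the two directions of the quasi-coloring compatibility: that equal colors give equal generated congruences, and that the order on colors matches containment of generated congruences. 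Once that is in place, the isomorphism $\phi\colon\Con(L)\to D$ is the one sending a congruence to the join in $D$ of the colors of the prime intervals it collapses; then \eqref{thmggBGrscc} is immediate, and \eqref{thmggBGrsca} reduces to the identity $\phi(\con_L(x,y))=\erep([x\wedge y,\,x\vee y])$ together with the fact that every interval of $L$ spanning the same ``colored distance'' as an interval of $C$ is realized — this is where $\srep(C,\labc,D)=Q$ gets used, and where one must check that no principal congruence of $L$ falls outside $Q$.

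For \eqref{thmggBGrscd} and the $\set{0,1}$-separating property: if $x\in\cast$, $y\in L\setminus\cast$, and $y\prec x$, then by the way the blocks are attached below the filter $\cast$, the prime interval $[y,x]$ lies in a block that forces collapse up to the top, so $\con_L(y,x)=1_{\Con(L)}$; more generally, for any $z\in L\setminus\set{0,1}$ the interval $[0,z]$ or $[z,1]$ contains such a prime interval (or a prime interval colored by a ``large enough'' element), giving $\con_L(0,z)=\con_L(z,1)=1_{\Con(L)}$. I expect the main obstacle to be precisely the careful design of the building blocks and the simultaneous verification of the quasi-coloring axioms — i.e., showing that the colors faithfully track $\Con(L)$ with \emph{no collapses other than the forced ones}; the join-irreducibility of $1_D$ is exactly what makes the top block behave and keeps $\Prime(\cast)$ sitting correctly inside $L$ as a filter, and without it the top element would need its own block glued on the side, which is the situation handled separately in the Main Theorem rather than here.
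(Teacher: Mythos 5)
Your overall blueprint matches the paper's strategy: build $L$ together with a surjective coloring $\whg\colon\Prime(L)\to \ppair{J(D)}{\leq}$, apply Lemma~\ref{lemmawHGhwhg} to obtain $\phi\colon\Con(L)\to D$, and then identify $\phi(\Princ(L))$ by tracking which joins of colors arise along maximal chains of intervals. You also correctly observe that $\cast$ must sit as a filter carrying $\labc^\ast$, and that the join-irreducibility of $1_D$ is what keeps $\cast$ compatible with the rest of the construction.

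The genuine gap is the construction of $L$ itself, which you both omit and misdescribe. Attaching ``one piece for each prime interval $\inp$ of $\cast$'' does not by itself control $\Con(L)$: it neither (i) forces distinct edges of $\cast$ carrying the same label to generate the same congruence, nor (ii) realizes the comparabilities of $\ppair{J(D)}{\leq}$ among the resulting join-irreducible congruences. In the paper's proof (Section~\ref{secnewproof}), and in Gr\"atzer's original, those two tasks are handled by separate devices that your sketch does not mention: a family of \emph{snake lattices}---one per chain in a generating family of chains of $J(D)$, each built by gluing copies of $K(\balpha,\bbeta)$---encodes the order on $J(D)$; and a sequence of \emph{equalizing flags}, attached iteratively via Hall--Dilworth-type gluings, identifies the alter-ego colors coming from different branches. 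Moreover, the $\set{0,1}$-separating property does not come from the $1_D$-labeled top edge of $\cast$, as you suggest, but from a third ingredient: a finite \emph{simple} lattice $S_{t+1}$ forming the frame into which the branches (including $C$, and hence $\cast$) are inserted. Your proposal explicitly defers ``the careful design of the building blocks,'' but that design, together with the verification that it yields a coloring onto $\ppair{J(D)}{\leq}$ with no extra collapses, is precisely the content of the proof. As written, the attempt amounts to a citation of Gr\"atzer combined with an inaccurate account of his construction.
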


\semmi{
We are going to give a new proof of Theorem~\ref{thmggBGrs}; this is motivated by the following three facts. First,  \ref{thmggBGrs}\eqref{thmggBGrsc}  is 
not stated in  Gr\"atzer~\cite{ggwith1} explicitly, so the reader has to analyze  the proof to find it. Second, in many cases, our method yields a smaller $L$ than the one constructed in \cite{ggwith1}. Third, we prove a little bit more; in fact, we are going to prove the following statement, which clearly implies Theorem~\ref{thmggBGrs} since the case $|D|=1$ is trivial.}

For the $1_D\in J(D)$ case, we are going to generalize  Theorem~\ref{thmggBGrs} as follows; note that if we did not care with $\Aut(L)$, then our lattice $L$ would often be smaller than the corresponding lattice constructed in  Gr\"atzer~\cite{ggwith1}.

\begin{theorem}\label{thmmyscnD} 
Let $D$ be a finite distributive lattice such that $1=1_D$ is join-irreducible and $|D|>1$, let $G$ be a finite group, and let $Q$ be candidate subset of $D$. If $Q\subseteq D$ is 
chain-representable, then for every 
 $J(D)$-labeled chain $\tripl C\labc D$
that represents $Q$, 
 there exist a finite  $\set{0,1}$-separating  lattice $L$ and an isomorphism $\phi\colon\Con(L)\to D$ such that 
\begin{enumeratei}
\item\label{thmmyscnDca}  $\srep(C,\labc,D)=\phi(\Princ(L))=Q$, 
\item\label{thmmyscnDcb} $\cast$,  which is defined in \eqref{eqpbxnNghTrZ}, is a filter of $L$, \item\label{thmggBGrscc} $\labc^\ast(\inp)=\phi(\con_L(\inp))$ holds for every $\inp \in \Prime(\cast)$, 
\item\label{thmmyscnDcd} for all $x\in \cast$ and $y\in L\setminus \cast$, if $y\prec x$, then $\con_L(y,x)=1_{\Con(L)}$, and
\item\label{thmmyscnDce} $\Aut(L)$ is isomorphic to $G$.
\end{enumeratei} 
\end{theorem}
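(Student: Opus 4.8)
The plan is to build $L$ in two layers: a \emph{skeleton} carrying all of the congruence information --- obtained essentially as in Gr\"atzer~\cite{ggwith1}, but re-derived through the quasi-coloring calculus of Section~\ref{sectqcol} --- onto which we graft a rigid \emph{decoration} that installs the prescribed group $G$ while leaving every congruence datum untouched; this is the content of Section~\ref{secnewproof}.

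\textbf{Layer 1: the skeleton.} From the given $J(D)$-labeled chain $\tripl C\labc D$ I would first produce a finite $\set{0,1}$-separating lattice $L_0$ and an isomorphism $\phi_0\colon\Con(L_0)\to D$ satisfying the analogues of conditions (i)--(iv): put the chain $\cast$ of \eqref{eqpbxnNghTrZ} on top of $L_0$; below each edge $\inp\in\Prime(\cast)$ glue a small ``flag'' gadget so that $\con_{L_0}(\inp)$ becomes exactly the join-irreducible congruence assigned to the label $\labc^\ast(\inp)$; and attach further gadgets forcing $\con_{L_0}(\inp)\le\con_{L_0}(\inr)$ precisely when $\labc^\ast(\inp)\le\labc^\ast(\inr)$ holds in $J(D)$, so that $\phi_0$ becomes an isomorphism onto $D$. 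That the principal congruences of $L_0$ correspond under $\phi_0$ to $\srep(C,\labc,D)=Q$ --- equivalently, that $\erep(I)$ is realized by $\con_{L_0}$ of the corresponding interval for each $I\in\Intv(\cast)$ --- is exactly what the quasi-coloring lemmas of Section~\ref{sectqcol} are designed to certify; $\set{0,1}$-separation and condition (iv) come for free, because everything strictly below $\cast$ is attached so that any nontrivial congruence on it already generates $1_{\Con(L_0)}$. At no extra cost I would also arrange that $0_{L_0}<0_C$, that $\cast$ is the unique maximal chain of $L_0$ that is also a filter, and that $L_0$ is ``hierarchically rigid'', meaning: once $\cast$ is fixed pointwise, every automorphism is forced to fix, pointwise, the whole of $L_0$ outside one designated prime interval $\inq\subseteq\ideal{0_C}$ with $0_\inq=0_{L_0}$; note that $\con_{L_0}(\inq)=1_{\Con(L_0)}$ automatically, by $\set{0,1}$-separation.

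\textbf{Layer 2: installing $G$.} By Frucht's theorem combined with a standard graph-to-lattice encoding there is a finite \emph{simple} lattice $S_G$ with $\Aut(S_G)\cong G$; fix one. Form $L$ from $L_0$ by replacing the prime interval $\inq$ --- and nothing else --- by an isomorphic copy of $S_G$, with $0_{S_G}=0_\inq=0_{L_0}$ and $1_{S_G}=1_\inq$, in such a way that $L_0':=L\setminus(S_G\setminus\{0_\inq,1_\inq\})$ remains a sublattice of $L$ isomorphic to $L_0$. Since $\inq$ lies strictly below $0_C$, the chain $\cast$ is untouched, it remains the unique maximal chain of $L$ that is a filter, and conditions (ii)--(iv) hold verbatim for $L$. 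Because the insertion occurs inside a prime interval whose $L_0$-congruence was $1_{\Con(L_0)}$, a short quasi-coloring argument --- any congruence $\theta$ of $L$ either leaves $S_G$ undisturbed, and is then the transport of a congruence of $L_0$ strictly below $1$, or identifies two distinct elements of $S_G$, whence $S_G$ being simple $\theta$ identifies $0_\inq$ with $1_\inq$, so via $L_0'$ it identifies $0_L$ with $1_L$ and equals $1_{\Con(L)}$ --- shows that $\phi_0$ extends to an isomorphism $\phi\colon\Con(L)\to D$ with $\Princ(L)=\Princ(L_0)$, hence $\phi(\Princ(L))=Q$, and that $L$ is again $\set{0,1}$-separating. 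Finally, for condition (v): any $\alpha\in\Aut(L)$ fixes $0_L$, carries $\cast$ onto itself and hence --- a chain being rigid --- pointwise; by hierarchical rigidity $\alpha$ then fixes $L_0'$ pointwise, so in particular it fixes $0_\inq$ and $1_\inq$, maps the interval $S_G=[0_\inq,1_\inq]$ onto itself, and $\restrict\alpha{S_G}\in\Aut(S_G)\cong G$. Conversely each automorphism of $S_G$, extended by the identity on $L_0'$, is an automorphism of $L$; hence $\Aut(L)\cong G$.

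The step I expect to be the main obstacle is the \emph{simultaneous} arrangement of these features: designing the skeleton $L_0$ so that (a) it is hierarchically rigid, with $\inq$ genuinely singled out and $\cast$ genuinely $\Aut$-invariant, (b) splicing $S_G$ into $\inq$ creates no new join-irreducible congruence and neither merges nor splits an old one, so that $\Con(L)\cong D$ and the correspondence $\Princ(L)\leftrightarrow Q$ survive exactly, and (c) the congruence- and principal-congruence bookkeeping of the base construction --- where the quasi-coloring machinery of Section~\ref{sectqcol} does the real work --- goes through for the particular gadgets used. A further, more clerical, difficulty is the handful of smallest lattices $D$ with $1_D\in J(D)$ --- already $|D|=2$, where $L$ itself must be simple and yet carry the prescribed chain $\cast$ as a filter --- which are dispatched by a direct ad hoc argument.
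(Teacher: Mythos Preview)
Your two-layer plan is correct and matches the paper's strategy: build a skeleton satisfying (i)--(iv) via the quasi-coloring calculus of Section~\ref{sectqcol} (this is Section~\ref{secnewproof}), then install $G$ by substituting a finite simple lattice $M_0$ with $\Aut(M_0)\cong G$ into a designated prime interval while making the remainder rigid (this is Section~\ref{sectlast}, using \eqref{eqpbxBrnskJ} and \eqref{eqpbxHtzcfrZmb}). The only cosmetic difference is placement: the paper puts $M_0$ into one of the ``thick'' edges of a $K(\balpha,\bbeta)$ gadget (see \eqref{eqpbxshwdZhTc}) and achieves your ``hierarchical rigidity'' by filling the remaining thick edges with pairwise non-isomorphic rigid simple lattices $M_1,M_2,\dots$, whereas you put $S_G$ into a bottom interval $\inq$ and leave the rigidity mechanism unspecified---but either location works for the same reason, and no separate treatment of $|D|=2$ is needed.
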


\begin{figure}[ht] 
\centerline
{\includegraphics[scale=1.0]{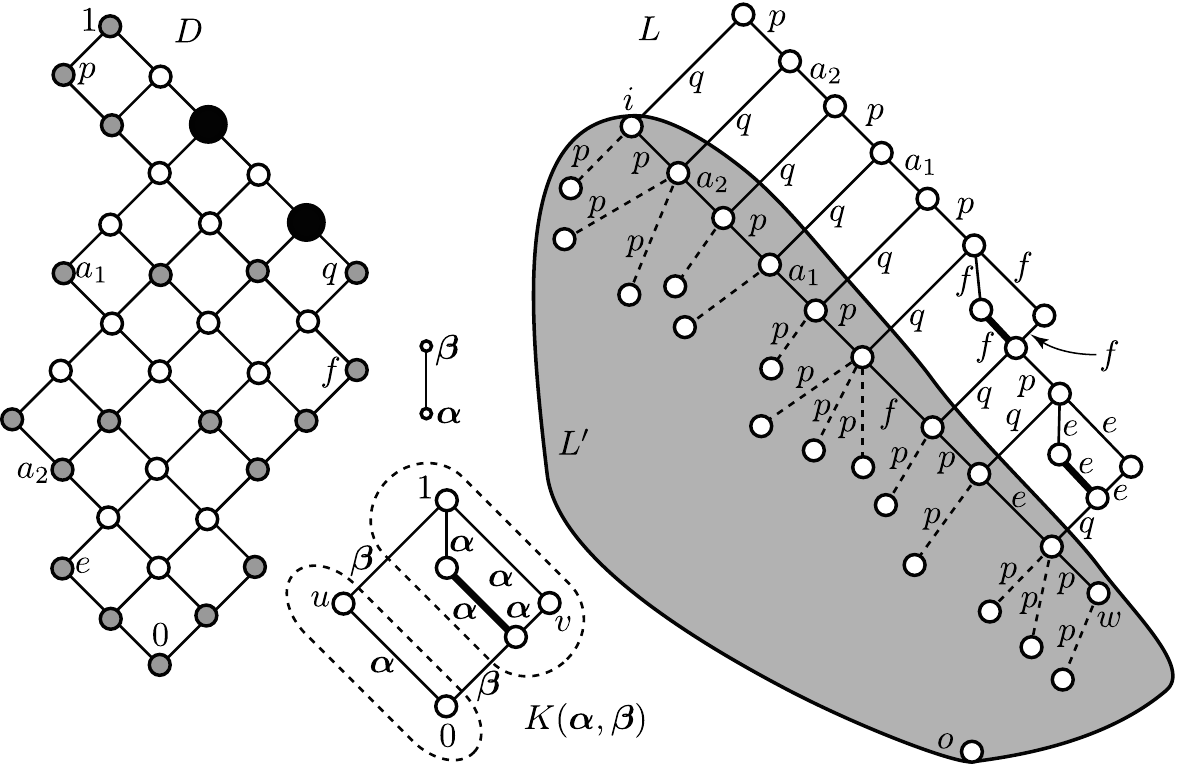}}
\caption{{$D$ with two coatoms,  $K(\balpha,\bbeta)$, and $L$ that we construct}}
\label{figexone}
\end{figure}

\section{From $1\in J(D)$ to $1\notin J(D)$, a proof-by-picture approach}\label{sectprbpct}
In this section, we outline our construction that derives the \eqref{thmmainc} $\Rightarrow$  \eqref{thmmaina} part of Theorem~\ref{thmmain} from Theorem~\ref{thmggBGrs}. 
Since the $1\in J(D)$ case follows from the conjunction of Cz\'edli~\cite{czgchrep} and  Gr\"atzer~\cite{ggwith1}, here we deal only with the case where $1=1_D$ is join-reducible.  So, in this section, we assume that $D$ is a planar distributive lattice such that $1=1_D$ is join-reducible. It belongs to the folklore that every element $x$ of $D$ covers at most two elements and $x$ is the join of at most two join-irreducible elements. Hence, there are distinct $p,q\in J(D)$ such that $1_D=p\vee q$ and $p\prec 1$; see Figure~\ref{figexone}. Also, let $Q\subseteq D$ such that $J^+(D)\subseteq Q$.
In the figure, $Q$ consists of the grey-filled and the  large black-filled elements. Let us denote by $D'$ the principal ideal $\ideal p=\set{d\in D: d\leq p}$, and let $Q'=Q\cap D'$.
It will not be hard to show that 
\begin{equation}
\parbox{9.6cm}{the filter
$\filter q=\set{d\in D: d\geq q}$ is a chain,  
$D$ is the disjoint union of $D'$ and $\filter q$, and $q$ is a maximal element of $J(D)$,} 
\label{eqpbxflRchN}
\end{equation}
as shown in the figure. 
Next, we focus on $(Q\cap \filter q)\setminus\set q$; it consists of the large black-filled elements in the figure. By the maximality of $q$ in $J(D)$, these  elements are join-reducible, whereby each of them is the join of  $q$ and another join-irreducible element $a_i$. 
In our case, 
$(Q\cap  \filter q)\setminus\set q=\set{a_1\vee q, a_2\vee q}$; in general, it is $\set{a_1,\dots, a_k}$ where $k\geq 0$.  We will show that
\begin{equation}
\text{$J(D')\cap\ideal q$ has at most two maximal elements.}
\label{eqtxtTwCWmnt}
\end{equation}
Let $\set{e,f}$ be the set of maximal elements of $J(D')\cap\ideal q$; note that $e=f$ is possible but causes no problem.

We know from Cz\'edli~\cite{czgchrep} that $Q'\subseteq D'$ is represented by a $J(D')$-labeled chain $\tripl{C_0}{\labc_0'}{D'}$. 
Let $C_1$ be the chain of length $2k+4=8$ whose edges, starting from below, are colored by $p,e,p,f,p,a_1,p,a_2$. The 
\begin{equation}
\parbox{8.2cm}{\emph{glued sum} $C:=C_0\mathop{\dot+}C_1$ is obtained from their summands by putting $C_1$ atop $C_0$ and identifying the top element of $C_0$ with the bottom element of $C_1$.}
\label{eqtxtGluedSm}
\end{equation}
In this way, we have obtained a $J(D')$-labeled chain $\tripl C{\labc'}{D'}$.
It will be easy to show that 
\begin{equation}
\text{$\tripl C{\labc'}{D'}$ also represents $Q'\subseteq D'$.}
\label{eqtxtCsprsTsskZ}
\end{equation}
Thus, Theorem~\ref{thmggBGrs} yields a finite lattice $L'$ and an isomorphism $\phi'\colon \Con(L')\to D'$ such that \ref{thmggBGrs}\eqref{thmggBGrsc} holds with $\kvint{Q'}{D'}{C}{L'}{\phi'}$ instead of $\kvint{Q}{D}{C}{L}{\phi}$. In the figure, $L'$ is represented by the grey-filled area on the right. In $C$, there is a unique element $w$ such that $C_0=\ideal w$ and $C_1=\filter w$ (understood in $C$, not in $L'$); only $\filter w$, which is a filter of $L'$ and also a filter of $\cast$ is indicated in the figure. Since $p=1_{D'}$, the top edge of $\filtrof{D'} w$ (the filter understood in $D'$) is $p$-labeled. Some 
elements outside $\cast$ that are covered by elements of $\filter w$ are also indicated in the figure; the covering relation in these cases are shown by dashed lines; \ref{thmggBGrs}\eqref{thmggBGrscd} and $p=1_{D'}$ motivate that these edges are labeled by $p$.

Next, by adding $3k+7=13$ new elements to $L'$, we obtain a larger lattice $L$, as indicated in Figure~\ref{figexone}. Each of the edges labeled by $q$ generate the same congruence, which we denote by $\what q$. Consider the lattice $K(\balpha,\bbeta)$ in the middle of  Figure~\ref{figexone}. For later reference, note that the only property of this lattice that we will use is that 
\begin{equation}
\text{$K(\balpha,\bbeta)$ has exactly one nontrivial congruence,}
\end{equation}
$\balpha$, whose blocks are indicated by dashed ovals.
Hence, if this lattice is a sublattice of $L$, then any of its $\balpha$-colored edge generates a congruence that is smaller than or equal to the congruence generated by a $\bbeta$-colored edge. Copies of this lattice ensure the following two ``comparabilities''
\begin{equation}
\text{$\phi'^{-1}(e)\leq \what q$ and $\phi'^{-1}(f)\leq \what q$.}
\label{eqtxtdzBgRsW}
\end{equation}
For $x\parallel y\in L$, if $x$ and $y$ cover their meet and are covered by their join, then $\set{x\wedge y,x,y,x\vee y}$ is a \emph{covering square} of $L$. For $i\in{1,\dots,k}=\set{1,2}$, 
\begin{equation}
\parbox{9cm}{the covering squares with $a_i,q,a_i,q$-labeled edges guarantee that $\phi'^{-1}(a_i)\vee\what q$ is a principal congruence.}
\label{eqtxthhRsK}
\end{equation} 
It will be easy to see that our construction yields all comparabilities and principal congruences that we need. We will rigorously prove that we do not get more comparabilities and principal congruences than those described in \eqref{eqtxtdzBgRsW} and \eqref{eqtxthhRsK}. Thus, it will be straightforward to conclude the  \eqref{thmmainc} $\Rightarrow$  \eqref{thmmaina} part of Theorem~\ref{thmmain}

\section{Quasi-colored lattices}\label{sectqcol}
Reflexive and transitive  relations are called \emph{quasiorderings}, also known as 
 \emph{preorderings}. If $\nu$ is a quasiordering on a set $A$, then $\ppair A\nu$ is said to be a \emph{quasiordered set}. For $H\subseteq A^2$, the least quasiordering of $A$ that includes $H$ will be denoted by  $\preogen_A(H)$, or simply by  $\preogen(H)$ if there is no danger of confusion. For $H=\set{\pair ab}$, we will of course write $\preogen(a,b)$.  
Quite often, especially if we intend to exploit the transitivity of $\nu$,  we  write $a\leq_\nu b$ or $b\geq_\nu a$  instead of $\pair a b\in\nu$. 
Also, $a=_\nu b$ will stand for  $\set{\pair ab, \pair ba}\subseteq \nu$.
The set of all quasiorderings on $A$ form a complete lattice $\Quo(A)$ under set inclusion. 
For $\nu,\tau\in\Quo(A)$, the join $\nu\vee \tau$ is $\preogen(\nu\cup\tau)$. \emph{Orderings} are antisymmetric quasiorderings, and a set with an ordering is an \emph{ordered set}, also known as a \emph{poset}.
Following Cz\'edli~\cite{czgreprect}, 
a \emph{quasi-colored lattice} is a lattice $L$ of finite length 
together with a  surjective map $\gamma$, called a \emph{quasi-coloring}, from $\Prime(L)$ onto a 
quasiordered set $\ppair H\nu)$ such that
for all $\inp,\inq\in\Prime(L)$, 
\begin{enumerate}[xxxxx]
\item[\colgammacon] if $\gamma(\inp)\geq_\nu \gamma(\inq)$, then  $\con(\inp)\geq\con(\inq)$, and
\item[\colcongamma] if $\con(\inp)\geq\con(\inq)$, then $\gamma(\inp)\geq_\nu\gamma(\inq)$.
\end{enumerate}
The values of $\gamma$ are called \emph{colors} (rather than quasi-colors). 
If $\gamma(\inp)=b$, then we say that $\inp$ is colored by $b$.
In figures, the colors of (some) edges are indicated by labels. 
Note the difference: even if the colors are often given by labels, a labeling like \eqref{eqpbxNmchTrlab} need not be a quasi-coloring. 
If $\ppair H\nu$ happens to be an ordered set, then $\gamma$ above is a \emph{coloring}, not just a quasi-coloring. The map $\natc$ from $\Prime(L)$ to $J(\Con(L))=\ppair{J(\Con(L))}{\leq}$,
defined by $\natc(\inp):=\con(\inp)$, is the so-called \emph{natural coloring} of $L$.

The relevance of quasi-colorings of a lattice $L$ of finite length lies in the fact that they determine $\Con(L)$; see  Cz\'edli~\cite[(2.8)]{czgreprect}. Even if we will use quasi-colorings in our stepwise constructing method, we need only the following statement.

\begin{lemma}\label{lemmawHGhwhg}
If  $L$ and $D$ are finite lattices, $D$ is distributive, and   $\whg\colon \Prime(L)\to J(D)$ is a coloring, then the map
$\mu\colon \ppair{J(\Con(L))}{\leq}\to \ppair{J(D)}{\leq}$,
defined by $\con(\inp)\mapsto \whg(\inp)$ where $\inp\in\Prime(L)$, is an order isomorphism.
\end{lemma}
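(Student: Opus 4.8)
The plan is to verify first that $\mu$ is well-defined and injective, then that it is surjective, and finally that both $\mu$ and $\mu^{-1}$ are order-preserving, so that $\mu$ is an order isomorphism. The key observation throughout is that a coloring (as opposed to a mere labeling) satisfies both \colgammacon\ and \colcongamma, and that the codomain $\ppair{J(D)}\leq$ is an \emph{ordered} set, so the quasiorder on it is genuine antisymmetric order. First I would recall that since $L$ has finite length, every join-irreducible congruence of $L$ is of the form $\con(\inp)$ for some $\inp\in\Prime(L)$; this is the standard fact that guarantees $\mu$ is defined on all of $J(\Con(L))$ and is surjective onto the image $\whg(\Prime(L))$, which equals $J(D)$ because $\whg$ is assumed surjective.

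For well-definedness and injectivity, suppose $\inp,\inq\in\Prime(L)$ with $\con(\inp)=\con(\inq)$. Then $\con(\inp)\geq\con(\inq)$ and $\con(\inq)\geq\con(\inp)$, so by \colcongamma\ we get $\whg(\inp)\geq\whg(\inq)$ and $\whg(\inq)\geq\whg(\inp)$ in $J(D)$; antisymmetry of the order on $J(D)$ forces $\whg(\inp)=\whg(\inq)$. This shows the assignment $\con(\inp)\mapsto\whg(\inp)$ does not depend on the choice of representative prime interval, i.e. $\mu$ is well-defined; and reading the same computation in reverse, if $\whg(\inp)=\whg(\inq)$ then $\whg(\inp)\geq_\nu\whg(\inq)$ and $\whg(\inq)\geq_\nu\whg(\inp)$, so \colgammacon\ yields $\con(\inp)=\con(\inq)$, giving injectivity. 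More generally this argument shows $\con(\inp)\leq\con(\inq)\iff\whg(\inp)\leq\whg(\inq)$: the forward direction is \colcongamma\ and the backward direction is \colgammacon. This biconditional is exactly the statement that $\mu$ is an order embedding.

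Surjectivity is then immediate: every element of $J(D)$ is $\whg(\inp)$ for some $\inp\in\Prime(L)$ since $\whg$ is surjective, and $\whg(\inp)=\mu(\con(\inp))$ with $\con(\inp)\in J(\Con(L))$. Combining this with the order-embedding property from the previous paragraph, $\mu$ is a bijective order embedding, hence an order isomorphism, and we are done. I do not anticipate a genuine obstacle here; the only point that deserves care is making explicit that $J(\Con(L))=\set{\con(\inp):\inp\in\Prime(L)}$ in a lattice of finite length, so that $\mu$ is literally defined on the whole ordered set $\ppair{J(\Con(L))}\leq$ and not merely on a generating subset — everything else is a direct unwinding of \colgammacon\ and \colcongamma\ together with antisymmetry in $J(D)$.
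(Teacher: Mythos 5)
Your proof is correct and follows essentially the same route as the paper's: use $J(\Con(L))=\set{\con(\inp):\inp\in\Prime(L)}$ to define $\mu$, derive well-definedness and order-preservation from \colcongamma{}, injectivity and the reverse order comparison from \colgammacon{}, and surjectivity from that of $\whg$. Your extra remark about antisymmetry in $\ppair{J(D)}{\leq}$ is a harmless elaboration of the same argument.
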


\begin{proof} It is well known that $J(\Con(L))=\{\con(\inp): \inp\in\Prime(L)\}$. We obtain from \colcongamma{} that $\mu$ is well defined, that is, 
if $\con(\inp)=\con(\inq)$, then $\whg(\inp)=\whg(\inq)$. Furthermore, \colcongamma{} gives that $\mu$ is order-preserving. It is surjective since so is $\whg$. We conclude from \colgammacon{} that $\mu(\con(\inp))\leq \mu(\con(\inq))$ implies that $\con(\inp)\leq\con(\inq)$. This also yields that $\mu$ is injective.
\end{proof}

Next,  assume that $\ppair{A_1}{\nu_1}$ and  $\ppair{A_2}{\nu_2}$ are quasiordered sets. By a \emph{homomorphism} $\delta\colon  \ppair{A_1}{\nu_1}\to \ppair{A_2}{\nu_2}$ we mean  a map $\delta\colon A_1\to A_2$ such that $\delta(\nu_1)\subseteq\nu_2$, that is, $\pair{\delta(x)}{\delta(y)}\in\nu_2$ holds for all $\pair xy\in\nu_1$. Following G.\ Cz\'edli  and A.~Len\-ke\-hegyi~\cite{r:czg-lenk}, 
\begin{equation}\label{e:diK245yatwd2kH}
\dker(\delta):=\bigl\{ \pair x y\in A_1^2: \bigl(g(x),g(y)\bigr)\in\nu_2\bigr\}
\end{equation}
is called the \emph{directed kernel} of $\delta$. Clearly, it is a quasiordering on $A_1$. 
Note that $\delta$ is a homomorphism if and only if 
 $\dker (\delta)\supseteq \nu_1$. 
The following lemma, which we need later, is Lemma 2.1 in Cz\'edli~\cite{czgreprect}. Note that we compose maps from right to left.

\begin{lemma}[\cite{czgreprect}]\label{lMa:wi6p}
Let $M$ be a finite lattice, and let  $\ppair Q\nu$ and $\ppair P\sigma$ be quasiordered sets. Let $\gamma_0\colon\Prime(M)\to \ppair Q\nu$ be a quasi-coloring. Assume that  $\delta\colon\ppair Q\nu\to \ppair P\sigma$ is a surjective homomorphism such that $\dker (\delta)\subseteq \nu$. Then the composite map $\delta\circ\gamma_0\colon \Prime(M)\to \ppair P\sigma$ is a quasi-coloring.
\end{lemma}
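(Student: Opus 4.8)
The goal is to prove Lemma~\ref{lMa:wi6p}, i.e.\ that if $\gamma_0\colon\Prime(M)\to\ppair Q\nu$ is a quasi-coloring and $\delta\colon\ppair Q\nu\to\ppair P\sigma$ is a surjective homomorphism with $\dker(\delta)\subseteq\nu$, then $\delta\circ\gamma_0$ is a quasi-coloring of $M$.

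\begin{proof}[Proof plan]
The plan is to verify the three defining conditions of a quasi-coloring for the composite map $\gamma_1:=\delta\circ\gamma_0$: surjectivity onto $P$, property \colgammacon{}, and property \colcongamma{}. Surjectivity is immediate, since $\gamma_0$ is surjective onto $Q$ and $\delta$ is surjective onto $P$, so their composite is surjective onto $P$. The bulk of the argument is the two implications relating $\sigma$-comparability of colors to containment of generated congruences, and the key observation driving both is the hypothesis $\dker(\delta)\subseteq\nu$, which together with the homomorphism property ($\nu\subseteq\dker(\delta)$) gives the equality $\dker(\delta)=\nu$; in other words, for $a,b\in Q$ one has $\pair ab\in\nu$ if and only if $\pair{\delta(a)}{\delta(b)}\in\sigma$.

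For \colgammacon{}, suppose $\inp,\inq\in\Prime(M)$ and $\gamma_1(\inp)\geq_\sigma\gamma_1(\inq)$, that is $\pair{\delta(\gamma_0(\inq))}{\delta(\gamma_0(\inp))}\in\sigma$. By $\dker(\delta)\subseteq\nu$ applied to the pair $\pair{\gamma_0(\inq)}{\gamma_0(\inp)}$, this gives $\gamma_0(\inp)\geq_\nu\gamma_0(\inq)$; since $\gamma_0$ is a quasi-coloring, \colgammacon{} for $\gamma_0$ yields $\con(\inp)\geq\con(\inq)$, as required. For \colcongamma{}, suppose $\con(\inp)\geq\con(\inq)$. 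Then \colcongamma{} for $\gamma_0$ gives $\gamma_0(\inp)\geq_\nu\gamma_0(\inq)$, and applying the homomorphism $\delta$ (i.e.\ $\nu\subseteq\dker(\delta)$, which is just $\delta(\nu)\subseteq\sigma$) we get $\delta(\gamma_0(\inp))\geq_\sigma\delta(\gamma_0(\inq))$, that is $\gamma_1(\inp)\geq_\sigma\gamma_1(\inq)$. This completes the verification.

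There is essentially no obstacle here; the one point deserving a moment's care is bookkeeping with the direction of the inequalities and the fact that the pair membership in $\dker(\delta)$ must be read off correctly, since $\dker(\delta)$ is defined as a set of ordered pairs and $\geq_\nu$ reverses the order relative to $\leq_\nu$. Once the equality $\dker(\delta)=\nu$ is noted (or, more precisely, once one uses $\dker(\delta)\subseteq\nu$ in the \colgammacon{} direction and $\nu\subseteq\dker(\delta)$ in the \colcongamma{} direction), both implications fall out immediately from the corresponding properties of $\gamma_0$. No finiteness or lattice-theoretic input beyond what is already packaged in ``$\gamma_0$ is a quasi-coloring'' is needed; $M$ being of finite length is only used implicitly so that $\Prime(M)$ is nonempty and $\con(\inp)$ makes sense.
\end{proof}
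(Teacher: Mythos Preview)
Your proof is correct. Note, however, that the paper does not actually prove this lemma: it is quoted verbatim as Lemma~2.1 of \cite{czgreprect} and no argument is given here. Your direct verification---checking surjectivity of $\delta\circ\gamma_0$, then using $\dker(\delta)\subseteq\nu$ for \colgammacon{} and $\nu\subseteq\dker(\delta)$ for \colcongamma{}---is exactly the natural (and essentially the only) way to establish the result, and it matches what one finds in the cited source.
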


The advantage of quasi-colorings over colorings is that, as opposed to orderings, quasiorderings form a lattice; see Cz\'edli~\cite[page 315]{czgreprect} 
for more motivation. Another motivating fact is given by the following lemma. If $L_1$ is an ideal and $L_2$ is a filter of a lattice $L$ such that $L_1\cup L_2=L$ and $L_1\cap L_2\neq \emptyset$, then $L$ is the (\emph{Hall--Dilworth}) \emph{gluing} of $L_1$ and $L_2$ over their intersection.

\begin{lemma}\label{lemmaHDqcol}
Let $L$ be a lattice of finite length such that it is the Hall--Dilworth gluing of $L_1$ and $L_2$  over $L_1\cap L_2$. For $i\in\set{1,2}$, let $\gamma_i\colon\Prime(L_i)\to\ppair{H_i}{\nu_i}$ be a quasi-coloring, and assume that 
\begin{equation}
H_1\cap H_2\subseteq \set{\gamma_1(\inp):\inp \in \Prime(L_1\cap L_2)\text{ and } \gamma_1(\inp)=\gamma_2(\inp)}.
\label{eqnTrsVbmzQ}
\end{equation}
Let $H:=H_1\cup H_2$, and define $\gamma\colon \Prime(L)\to H$ by the rule 
\begin{equation}
\gamma(\inp)=
\begin{cases}
\gamma_1(\inp) &\text{ for }\inp\in \Prime(L_1),\cr
\gamma_2(\inp) &\text{for }\inp\in \Prime(L)\setminus\Prime(L_1). 
\end{cases}
\label{eqddvcmCsghH}
\end{equation}
Let
\begin{equation}
\nu=\preogen\bigl(\nu_1\cup \nu_2\cup\set{\pair{\gamma_j(\inp)}{\gamma_{3-j}(\inp)}: \inp\in\Prime(L_1\cap L_2),\,\,\,j\in\set{1,2} }\bigr).
\label{eqnghmnNz}
\end{equation}
Then $\gamma\colon\Prime(L)\to \ppair H\nu$ is a quasi-coloring. 
\end{lemma}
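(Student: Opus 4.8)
The plan is to verify the two quasi-coloring axioms \colgammacon{} and \colcongamma{} directly for the map $\gamma\colon\Prime(L)\to\ppair H\nu$. First I would record the elementary structural facts about a Hall--Dilworth gluing: a prime interval $\inp$ of $L$ lies either in $L_1$ or in $L_2$ (possibly in both, precisely when $\inp\in\Prime(L_1\cap L_2)$), and the congruence generated by $\inp$ in $L$ is obtained from the congruences generated in $L_1$ and in $L_2$ by the standard "congruence spreading" argument: $\con_L(\inp)\geq\con_L(\inq)$ holds if and only if there is a sequence of prime intervals from $\inp$ to $\inq$, each consecutive pair being \emph{perspective} or sharing a side within one of the two blocks $L_1$, $L_2$. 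Equivalently, the ordered set $\ppair{J(\Con L)}{\leq}$ is the quasiorder on $\Prime(L_1)\cup\Prime(L_2)$ generated by the "same congruence in $L_i$" relations together with the identifications forced on $\Prime(L_1\cap L_2)$ — and this is exactly mirrored, via $\gamma_1,\gamma_2$, by the definition of $\nu$ in \eqref{eqnghmnNz}. It is worth invoking Lemma~\ref{lMa:wi6p} mentally here: the "restriction" of a quasi-coloring of $L$ to a filter or ideal is again a quasi-coloring, which gives compatibility of $\gamma$ with $\gamma_1$ and $\gamma_2$ for free; the content of the lemma is going the other way.

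For \colcongamma{}, suppose $\con_L(\inp)\geq\con_L(\inq)$. Spell this out as a finite chain of prime-interval steps $\inp=\inr_0,\inr_1,\dots,\inr_n=\inq$ where each step $\inr_{t}\to\inr_{t+1}$ is either (a) a perspectivity or prime-quotient domination inside $L_1$, or (b) the same inside $L_2$. For a type-(a) step, $\con_{L_1}(\inr_t)\geq\con_{L_1}(\inr_{t+1})$, hence by \colcongamma{} for $\gamma_1$ we get $\gamma_1(\inr_t)\geq_{\nu_1}\gamma_1(\inr_{t+1})$, and since $\nu_1\subseteq\nu$ we get $\gamma(\inr_t)\geq_\nu\gamma(\inr_{t+1})$ — with the caveat that if $\inr_t\in\Prime(L_1\cap L_2)$ we may have chosen $\gamma(\inr_t)=\gamma_2(\inr_t)\neq\gamma_1(\inr_t)$, which is handled by the bridging generators $\pair{\gamma_j(\inr_t)}{\gamma_{3-j}(\inr_t)}$ included in \eqref{eqnghmnNz}. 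Symmetrically for type-(b). Transitivity of $\nu$ then chains these together to give $\gamma(\inp)\geq_\nu\gamma(\inq)$. The one real subtlety is justifying that $\con_L(\inp)\geq\con_L(\inq)$ really does decompose into such intra-block steps; this is a standard fact about gluings (a nontrivial congruence of $L$ restricts to $L_i$, and congruence generation commutes with the gluing), and I would cite it or give the short spreading argument.

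For \colgammacon{}, the work is the opposite direction and is where the hypothesis \eqref{eqnTrsVbmzQ} is essential. Suppose $\gamma(\inp)\geq_\nu\gamma(\inq)$; by the description of $\nu$ as a generated quasiorder, there is a sequence of generator steps in $\nu_1\cup\nu_2\cup\set{\text{bridges}}$ from $\gamma(\inp)$ to $\gamma(\inq)$. Each $\nu_i$-step, say from $a$ to $b$ with $a\geq_{\nu_i}b$ and $a,b\in H_i$, must be realized: pick $\inr\in\Prime(L_i)$ with $\gamma_i(\inr)=a$ and $\inr'\in\Prime(L_i)$ with $\gamma_i(\inr')=b$; then \colgammacon{} for $\gamma_i$ gives $\con_{L_i}(\inr)\geq\con_{L_i}(\inr')$, hence $\con_L(\inr)\geq\con_L(\inr')$. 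A bridge step from $\gamma_j(\inr)$ to $\gamma_{3-j}(\inr)$ for some $\inr\in\Prime(L_1\cap L_2)$ costs nothing since both name the same prime interval $\inr$ of $L$, so $\con_L(\inr)=\con_L(\inr)$. The delicate point is that an intermediate generated element $c\in H_1\cap H_2$ may be reached as $\gamma_1(\inr)$ from one side and must be continued as $\gamma_2(\inr'')$ from the other side with $\inr\neq\inr''$; this is exactly what \eqref{eqnTrsVbmzQ} rules out by forcing any $c\in H_1\cap H_2$ to be $\gamma_1(\inr)=\gamma_2(\inr)$ for a common prime interval $\inr\in\Prime(L_1\cap L_2)$, so the chain of prime intervals in $L$ does not break. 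Assembling: each generator step in $\nu$ lifts to a $\geq$ step between the corresponding $\con_L(\cdot)$'s, and transitivity of $\leq$ on $\Con L$ yields $\con(\inp)\geq\con(\inq)$. Surjectivity of $\gamma$ is immediate from surjectivity of $\gamma_1$ and $\gamma_2$ and $H=H_1\cup H_2$. I expect the main obstacle to be bookkeeping the "which copy of a shared color am I currently holding" issue cleanly — i.e., making the lifting of $\nu$-generator sequences to $\Con L$-sequences airtight precisely at elements of $H_1\cap H_2$ — and this is exactly the role of hypothesis \eqref{eqnTrsVbmzQ}, so I would foreground that hypothesis at the start of the \colgammacon{} argument.
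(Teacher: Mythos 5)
Your plan for \colgammacon{} is essentially sound and matches the spirit of the paper's argument: decompose a $\nu$-step chain into generator steps, realize each step by prime intervals in the appropriate $L_i$, and use \eqref{eqnTrsVbmzQ} to bridge when an intermediate color in $H_1\cap H_2$ is witnessed by a prime interval of $L_1$ on one side and by a prime interval of $L_2$ on the other. You correctly identify \eqref{eqnTrsVbmzQ} as the decisive hypothesis and correctly sketch how a common witness $\inr'''\in\Prime(L_1\cap L_2)$ repairs the chain. (Small aside: Lemma~\ref{lMa:wi6p} is about composing a quasi-coloring with a suitable surjection, not about restricting to an ideal or a filter, so the parenthetical appeal to it is off target, though harmless.) You even shortcut the paper slightly here: where the paper routes both directions through the Prime-Projectivity Lemma, you pass from $\con_{L_i}(\inr)\geq\con_{L_i}(\inr')$ to $\con_L(\inr)\geq\con_L(\inr')$ directly via congruence restriction, which is legitimate.

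The genuine gap is in \colcongamma{}. You assert that $\con_L(\inp)\geq\con_L(\inq)$ ``decomposes into intra-block steps'' and offer to dismiss this as ``a standard fact about gluings''. It is not a citable fact; it is precisely the nontrivial content of this direction, and the Prime-Projectivity Lemma does not deliver it for free. The chain of prime-perspectivities that Lemma~\ref{primprojlemma} produces can (and generically does) contain single steps $\inp_{i-1}\ppup\inp_i$ or $\inp_{i-1}\ppdn\inp_i$ in which one endpoint lies in $\Prime(L_1)\setminus\Prime(L_2)$ and the other in $\Prime(L_2)\setminus\Prime(L_1)$; such a step is not intra-block and must be split. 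The paper does exactly that: for a single $\inp\ppup\inq$ crossing from $L_1$ to $L_2$, it uses the order structure of the Hall--Dilworth gluing to find $x_3\in L_1\cap L_2$ with $x_1\leq x_3\leq x_2$, sets $y_3:=y_1\vee x_3\in L_1\cap L_2$, shows $\con(\inq)\leq\con(x_3,y_3)$, and then uses the distributivity of $\Con(L)$ together with join-irreducibility of $\con(\inq)$ (i.e.\ \eqref{eqdjlfkWsDbF}) to extract a prime interval $\inr\in\Prime([x_3,y_3])\subseteq\Prime(L_1\cap L_2)$ with $\con(\inp)\geq\con(\inr)\geq\con(\inq)$; only then can the hypotheses on $\gamma_1$ and $\gamma_2$ be applied separately. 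Your proposal would need to supply this construction rather than defer it; as written, the hard half of the lemma is left as an undischarged promissory note.
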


Note the following three facts. In \eqref{eqddvcmCsghH}, the subscripts 1 and 2 could be interchanged and even a ``mixed'' definition of $\gamma(\inp)$ would work. Even if $\gamma_1$ and $\gamma_2$ are colorings, $\gamma$ in Lemma~\ref{lemmaHDqcol} is only a quasi-coloring in general. 
The case where $|L_1|=|L_2|=2=|L|-1$ and $H_1=H_2$ exemplifies that the assumption \eqref{eqnTrsVbmzQ} cannot be omitted.

%
%
%
%

Before proving this lemma, we recall a useful statement from Gr\"atzer~\cite{ggprimeprojective}.
For $i\in \set{1,2}$, let $\inp_i=[x_i,y_i]$
be prime intervals of a lattice $L$. We say that $\inp_1$ is \emph{prime-perspective down} to $\inp_2$, denoted by $\inp_1 \ppdn \inp_2$ or $\pair{x_1}{y_1}\ppdn \pair{x_2}{y_2}$,  if 
$y_1=x_1\vee y_2$ and $x_1\wedge y_2\leq x_2$; see Figure~\ref{fig-pp}, where the solid lines indicate prime intervals while the dotted ones stand for the ordering relation of $L$. We define \emph{prime-perspective up}, denoted by  $\inp_1 \ppup \inp_2$, dually. The reflexive transitive closure of the union of $\ppup$ and $\ppdn$ is called \emph{prime-projectivity}.
%

\begin{figure}[ht]
\centerline
{\includegraphics[scale=1.0]{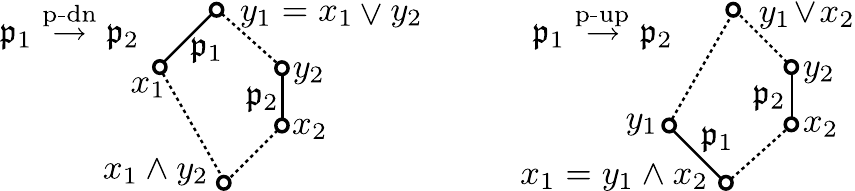}}
\caption{Prime perspectivities   \label{fig-pp}}
\end{figure}

\begin{lemma}[{Prime-Projectivity Lemma; see  Gr\"atzer~\cite{ggprimeprojective}}]
\label{primprojlemma}
Let $L$ be a lattice of finite length, and let $\inr_1$ and $\inr_2$ be prime intervals in $L$. 
Then $\con(\inr_1)\geq \con(\inr_2)$ 
if and only if there exist an $n\in\mathbb N_0$ 
and a sequence  $\inr_1=\inp_0$, $\inp_1$, \dots, $\inp_n=\inr_2$ of prime intervals such that for each $i\in\set{1,\dots, n}$, $\inp_{i-1}\ppdn \inp_i$ or $\inp_{i-1}\ppup \inp_i$.
\end{lemma}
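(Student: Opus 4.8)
The plan is to prove the two implications separately; the forward one (a sequence of prime-perspectivities yields a containment of congruences) is a short computation, while the converse is where the work lies. Write $\inr_j=[0_{\inr_j},1_{\inr_j}]$ for $j\in\set{1,2}$, and say that a prime interval $\inp$ is \emph{prime-projective from} $\inr_1$ if there is a sequence of the kind described in the statement running from $\inr_1$ to $\inp$. For the ``if'' direction it suffices to show that $\inp\ppdn\inq$ implies $\con(\inp)\geq\con(\inq)$, since the case $\inp\ppup\inq$ is dual and an arbitrary admissible sequence then yields the conclusion by transitivity of $\leq$ on $\Con(L)$. So let $\inp=[x_1,y_1]$ and $\inq=[x_2,y_2]$ with $y_1=x_1\vee y_2$ and $x_1\wedge y_2\leq x_2$. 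From $\pair{x_1}{y_1}\in\con(\inp)$ and the substitution property we get $\pair{x_1\wedge y_2}{y_1\wedge y_2}\in\con(\inp)$; since $y_1\wedge y_2=(x_1\vee y_2)\wedge y_2=y_2$ and $x_1\wedge y_2\leq x_2\leq y_2$, the congruence $\con(\inp)$ collapses the subinterval $[x_2,y_2]$ of $[x_1\wedge y_2,y_2]$ as well, that is, $\con(\inp)\geq\con(\inq)$.

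For the ``only if'' direction I would use that $L$ has finite length. By the part already established, it is enough to prove that $\pair{0_{\inr_2}}{1_{\inr_2}}\in\con(\inr_1)$ implies that $\inr_2$ is prime-projective from $\inr_1$. To this end I would introduce the binary relation $\Psi$ on $L$ by declaring $\pair xy\in\Psi$ if and only if every prime interval of the interval $[x\wedge y,\,x\vee y]$ is prime-projective from $\inr_1$. Reflexivity of $\Psi$ is vacuous (the interval $[x,x]$ has no prime subinterval) and symmetry is built into the definition, so the crux is that $\Psi$ is a congruence of $L$. Granting this, the proof finishes at once: $\inr_1$ is a prime interval, hence $[0_{\inr_1},1_{\inr_1}]$ has exactly one prime subinterval, namely $\inr_1$ itself, which is trivially prime-projective from $\inr_1$; therefore $\pair{0_{\inr_1}}{1_{\inr_1}}\in\Psi$, so $\con(\inr_1)\subseteq\Psi$ (it being the least congruence collapsing that pair), and then $\pair{0_{\inr_2}}{1_{\inr_2}}\in\con(\inr_1)\subseteq\Psi$ forces $\inr_2$, which is likewise its own unique prime subinterval, to be prime-projective from $\inr_1$.

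It remains to see that $\Psi\in\Con(L)$. Transitivity of $\Psi$ and its compatibility with $\vee$ and $\wedge$ both reduce to the following single claim about the finite-length lattice $L$: if $u\leq v$ and every prime interval of $[u,v]$ is prime-projective from $\inr_1$, then the same holds for $[u\vee t,\,v\vee t]$ and for $[u\wedge t,\,v\wedge t]$, for every $t\in L$. (For transitivity one expresses $[x\wedge z,\,x\vee z]$ in terms of $[x\wedge y,\,x\vee y]$ and $[y\wedge z,\,y\vee z]$ by successive meets and joins with $x$, $y$, $z$; compatibility with $\vee,\wedge$ is then the special case of the claim itself.) The easy half of the claim is that for a covering $e\prec f$ and any $t$, either $e\vee t=f\vee t$, or else $e\vee t\prec f\vee t$ and a two-line computation using $e\prec f$ gives $[e,f]\ppup[e\vee t,\,f\vee t]$; this handles the prime intervals lying immediately above elements of the form $c\vee t$ with $c\in[u,v]$. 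The step I expect to be the main obstacle is that $z\mapsto z\vee t$ need not map $[u,v]$ onto $[u\vee t,\,v\vee t]$ and that lattices of finite length need not be graded, so a prime interval $[s,s']$ of $[u\vee t,\,v\vee t]$ may lie below no $c\vee t$. To overcome this I would push $[s,s']$ back down by meeting with $v$: since $[s\wedge v,\,s'\wedge v]\subseteq[u,v]$, all of its prime intervals are prime-projective from $\inr_1$, and a secondary induction on the length of $[s\wedge v,\,s'\wedge v]$, combined with covering-square arguments arranged so that every perspectivity that arises is literally a $\ppup$ or $\ppdn$ step, should yield that $[s,s']$ too is prime-projective from $\inr_1$. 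This covering-square bookkeeping in a non-graded lattice is the delicate heart of the proof; an alternative route would be to quote the classical description of the congruence $\con(a,b)$ generated by a pair and then refine each of its weak-perspectivity steps into finitely many prime-perspectivities, which amounts to essentially the same work.
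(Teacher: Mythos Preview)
The paper does not give its own proof of this lemma; it simply quotes it from Gr\"atzer~\cite{ggprimeprojective}. So there is nothing to compare your argument against in the paper itself, and the question becomes whether your sketch stands on its own.

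Your ``if'' direction is clean and correct. In the ``only if'' direction, the overall plan of defining $\Psi$ and showing it is a congruence containing $\pair{0_{\inr_1}}{1_{\inr_1}}$ is sound, but your execution has a genuine gap at exactly the point you call the ``easy half''. You assert that for a covering $e\prec f$ and any $t$, either $e\vee t=f\vee t$ or $e\vee t\prec f\vee t$. This dichotomy fails already in $N_5$: with $0\prec a$ and $t=b$ (where $0<b<c<1$ is the longer side), one gets $0\vee b=b$ and $a\vee b=1$, yet $b<c<1$ is not a covering. So $[e\vee t,f\vee t]$ need not be a prime interval, and the step $[e,f]\ppup[e\vee t,f\vee t]$ is not available.

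Your fallback, ``push $[s,s']$ back down by meeting with $v$'', breaks on the same example. With $u=0$, $v=a$, $t=b$ and the prime interval $[s,s']=[b,c]\subseteq[u\vee t,v\vee t]=[b,1]$, one finds $s\wedge v=b\wedge a=0=c\wedge a=s'\wedge v$, so $[s\wedge v,s'\wedge v]$ is a singleton and carries no information. The conclusion you want is still true here (indeed $[0,a]\ppup[b,c]$ directly, since $a\wedge b=0$ and $a\vee b=1\geq c$), but neither branch of your argument reaches it. What is missing is precisely the observation that when $e\prec f$ and $e\vee t<f\vee t$, one has $f\wedge(e\vee t)=e$ and $f\vee(e\vee t)=f\vee t$, so $[e,f]\ppup\inq$ for \emph{every} prime interval $\inq$ of $[e\vee t,f\vee t]$; this is how the non-graded case is actually handled, and it replaces both your ``easy half'' and the meet-with-$v$ detour. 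Once you build this into the inductive step, the proof that $\Psi$ is a congruence goes through.
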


\begin{proof}[Proof of Lemma~\ref{lemmaHDqcol}] In order to prove
\colgammacon{}, let $\inp$ and $\inq$ be prime intervals of $L$ such that $\gamma(\inp)\geq_\nu\gamma(\inq)$. By the definition of $\nu$, 
there is a sequence $\gamma(\inp)=h_0, h_1, h_2,\dots, h_k=\gamma(\inq)$ in $H$ such that, for each $i$, ${h_{i-1}}\geq_{\nu_1}{h_i}$,  or ${h_{i-1}}\geq_{\nu_2}{h_i}$, or $\pair{h_{i-1}}{h_i}=\pair{\gamma_{j}(\inr_{i-1})}{\gamma_{3-j}(\inr_i')}$  for some $j=j(i)\in\set{1,2}$ and  $\inr_{i-1}=\inr_i'\in\Prime(L_1\cap L_2)=\Prime(L_1)\cap \Prime(L_2)$. 
In the first case, by the surjectivity of $\gamma_1$ and the satisfaction of \colgammacon{} in $L_1$, we can pick  prime intervals $\inr_{i-1},\inr_i'\in\Prime(L_1)$ such that  $\gamma_1(\inr_{i-1})=h_{i-1}$, $\gamma_1(\inr'_{i})=h_{i}$, and   $\con_{L_1}(\inr_{i-1})\geq \con_{L_1}(\inr_i')$.  
In the second case, we obtain similarly that $\gamma_2(\inr_{i-1})=h_{i-1}$, $\gamma_2(\inr'_{i})=h_{i}$, and  $\con_{L_2}(\inr_{i-1})\geq \con_{L_2}(\inr_i')$ for some $\inr_i,\inr_i'\in\Prime(L_2)$. In the third case, both $\con_{L_1}(\inr_{i-1})\geq \con_{L_1}(\inr_i')$ and $\con_{L_2}(\inr_{i-1})\geq \con_{L_2}(\inr_i')$ trivially hold, since 
$\inr_{i-1}=\inr_i'$.

Hence, for  for every $i$ in $\set{1,\dots,2}$, 
Lemma~\ref{primprojlemma} gives us 
\begin{equation}
\text{a ``prime-projectivity sequence'' from $\inr_{i-1}$ to $\inr_i'$.}
\label{eqtxtppsQng}
\end{equation}
Since $\Prime(L_1)\subseteq \Prime(L)$ and $\Prime(L_2)\subseteq \Prime(L)$, this sequence is in $\Prime(L)$. We claim that, for each $i\in\set{1,\dots,k}$, 
\begin{equation}
\text{there is a prime-projectivity sequence from $\inr_{i}'$ to $\inr_i$.}
\label{eqtxtppsjzTn}
\end{equation}
In order to verify this, assume first that 
$\gamma_1(\inr_i')=h_i=\gamma_1(\inr_i)$. Then 
 ${\gamma_1(\inr_i')}  \geq_{\nu_1}{\gamma_1(\inr_i)}$ and the validity of \colgammacon{} for $\gamma_1$ imply that $\con_{L_1}(\inr_i')\geq\con_{L_1}(\inr_i)$, whereby  \eqref{eqtxtppsjzTn} follows from  Lemma~\ref{primprojlemma}. The case $\gamma_2(\inr_i')=h_i=\gamma_2(\inr_i)$ is similar. Hence, we can assume that $\gamma_j(\inr_i')=h_i=\gamma_{3-j}(\inr_i)$ for some $j\in\set{1,2}$. 
Clearly,  $h_i$ is in $H_1\cap H_2$, since it is in the range of $\gamma_j$ and that of  $\gamma_{3-j}$. By \eqref{eqnTrsVbmzQ}, we can pick a prime interval $\inr_i''\in\Prime(L_1\cap L_2)$ such that 
$\gamma_j(\inr_i'')=h_i=\gamma_{3-j}(\inr_i'')$. 
Applying \colgammacon{} and Lemma~\ref{primprojlemma} to $\gamma_j$, we obtain that 
there is a  prime-projectivity sequence from $\inr_{i}'$ to $\inr_i''$. Similarly,  we obtain that there is another sequence from $\inr_{i}''$ to $\inr_i$. Concatenating these two sequences, we obtain a prime-projectivity sequence from $\inr_{i}'$ to $\inr_i$. 
This shows the validity of \eqref{eqtxtppsjzTn}. Finally, concatenating the sequences from \eqref{eqtxtppsQng} and those from 
\eqref{eqtxtppsjzTn}, we obtain a prime-projectivity sequence from $\inp=\inr_0$ to $\inq=\inr_k$. So the easy direction of  Lemma~\ref{primprojlemma} implies that $\con(\inp)\geq \con(\inq)$, proving that $L$ satisfies \colgammacon{}.

Observe that 
\begin{equation}
\parbox{8cm}{
for all $i\in \set{1,2}$ and $\inr\in\Prime(L_i)$, 
$\,\gamma_i(\inr)=_\nu \gamma(\inr)$;}
\label{eqpbxdPmzTs}
\end{equation}
this is clear either because $\inr\notin\Prime(L_{3-i})$, or because $\inr\in\Prime(L_1\cap L_2)$ and 
 $\gamma_1(\inr)=\gamma_2(\inr)=\gamma(\inr)$ or 
$\pair{\gamma_{i}(\inr)}{\gamma_{3-i}(\inr)}\in\nu$ by \eqref{eqnghmnNz}.

Next, in order to prove that $L$ satisfies 
\colcongamma{}, assume that $\inp,\inq\in\Prime(L)$ such that $\con(\inp)\geq \con(\inq)$. We need to show that $\gamma(\inp)\geq_\nu\gamma(\inq)$. 
This is clear if $\inp=\inq$. 
Since $\nu$ is transitive, Lemma~\ref{primprojlemma} and duality allow us to assume that $\inp\ppup \inq$.  We are going  to deal only with the case $\inp\in\Prime(L_1)\setminus\Prime(L_2)$ and  $\inq\in\Prime(L_2)\setminus\Prime(L_1)$, since the cases  $\set{\inp,\inq}\subseteq\Prime(L_1)$ and $\set{\inp,\inq}\subseteq\Prime(L_2)$ are much easier 
while the case $\inp\in\Prime(L_2)\setminus\Prime(L_1)$ and  $\inq\in\Prime(L_1)\setminus\Prime(L_2)$ is excluded by the upward orientation of the prime-perspectivity. So $\inp=[x_1,y_1]=[y_1\wedge x_2,y_1]$ and
$\inq =[x_2,y_2]$ with $y_2\leq y_1\vee x_2$; see Figure~\ref{fig-pp}. Clearly, $x_1,y_1\in L_1\setminus L_2$ and $x_2,y_2\in L_2\setminus L_1$. By the description of the ordering relation in Hall--Dilworth gluings, we can pick an $x_3\in L_1\cap L_2$ such that $x_1\leq x_3\leq x_2$. Let $y_3:=y_1\vee x_3$. It is in $L_1\cap L_2$ since $L_1$ is a sublattice and $L_2$ is a filter in $L$. 
Since $x_3\vee x_2 =x_2\leq y_2\leq y_1\vee x_2= y_1\vee  (x_3\vee x_2)=(y_1\vee x_3)\vee x_2= y_3\vee x_2$, we have that $\con(\inq)=\con({x_2},{y_2})\leq \con({x_3},{y_3})$. Combining this inequality,  the well-known rule that 
\begin{equation}
\parbox{8cm}{in every finite distributive lattice $D$,\\ 
$(a\in J(D)$  and $a\leq b_1\jj\dots\jj b_n)\Longrightarrow (\exists i)(a\leq b_i)$,}
\label{eqdjlfkWsDbF}
\end{equation}
$\con(x_3,y_3)=\bigvee\set{\con(\inr): \inr\in\Prime([x_3,y_3])}$, and the distributivity of $\Con(L)$,  
we obtain a prime interval $\inr\in \Prime([x_3,y_3])\subseteq \Prime(L_1\cap L_2)$ such that
$\con(\inq)\leq \con(\inr)$. Since $\con(\inp)=\con(x_1,y_1)$ collapses  $(x_3,y_3)=(x_1\vee x_3, y_1\vee x_3)$, it collapses $\inr$. Hence, $\con(\inp)\geq \con(\inr)$. Since $\gamma_1$ is a quasi-coloring, this inequality and \eqref{eqpbxdPmzTs} yield that $\gamma(\inp)=\gamma_1(\inp)\geq_{\nu_1} \gamma_1(\inr) =_\nu \gamma(\inr)$. Hence, $\gamma(\inp)\geq_\nu  \gamma(\inr)$. Since $\gamma_2$ is also a quasi-coloring, the already established $\con(\inr)\geq \con(\inq)$ leads to $\gamma(\inr)\geq_\nu  \gamma(\inq)$ similarly. Thus, by transitivity, $\gamma(\inp)\geq_\nu  \gamma(\inq)$, showing that $L$ satisfies \colcongamma{}.
\end{proof}

\section{Deriving the Main Theorem from Theorem~\ref{thmggBGrs}}\label{sectdrve}
Based on  Theorem~\ref{thmggBGrs} and the plan outlined in Section~\ref{sectprbpct}, 
this section is devoted to the following proof.

\begin{proof}[First part of the proof of Theorem~\ref{thmmain}]
The implication \eqref{thmmainb} $\Rightarrow$ \eqref{thmmaina} is trivial, and it was proved in Cz\'edli~\cite{czgchrep} that \eqref{thmmaina} implies \eqref{thmmainc}. So we  need to show only that   \eqref{thmmainc} implies  \eqref{thmmainb}.
In this section we show only that  \eqref{thmmainc} implies  \eqref{thmmaina}; we will explain
in Section~\ref{sectlast}
 how to modify our argument to yield the required implication  \eqref{thmmainc} $\Rightarrow$  \eqref{thmmainb}.

In order to prove that  \eqref{thmmainc} $\Rightarrow$  \eqref{thmmaina}, let $D$ be an arbitrary planar distributive lattice with at most one join-reducible atom. We can assume that $|D|>1$ since the opposite case is trivial.

First, assume that $1_D\in J(D)$. We know from Cz\'edli~\cite{czgchrep} that for every $Q$, if 
$J^+(D)\subseteq Q\subseteq D$, then the inclusion $Q\subseteq D$ is chain-representable. Hence, by Theorem~\ref{thmggBGrs}, it is principal congruence representable, as required. 

%
%
%
%

Second, assume that $1_D\notin J(D)$. Let $Q$ be a subset of $D$ such that $J^+(D)\subseteq Q$.
In order to obtain a lattice $L$ that 
witnesses the principal congruence representability of $Q\subseteq D$, we do exactly the same as in Section~\ref{sectprbpct}; of course, now we cannot assume that $k=2$. For the sake of contradiction, suppose that $q$ is not a maximal element of $J(D)$, and pick a $q'\in J(D)$ such that $q<q'$. Then $q'\leq 1_D=p\vee q$ and \eqref{eqdjlfkWsDbF} imply that $q'\leq p$, whereby $q<q'\leq p$ yields that $1_D=p\vee q=p\in J(P)$, which is a contradiction. Thus, $q$ is a maximal element of $J(D)$. 
We know from the folklore or, say, from  Cz\'edli and Gr\"atzer~\cite{czgggchapter} that
\begin{equation}
\text{$J(D)$ is the union of two chains.}
\label{eqtxttwCnS}
\end{equation}
So we have two chains $C_1$ and $C_2$ such that $J_0(D)=C_1\cup C_2$ and $0\in C_1\cap C_2$. Let, say, $q\in C_2$. If $x\in \filter q$ and $x\neq q$, then $x=y_1\vee y_2$ for some $y_1\in C_1$ and 
$y_2\in C_2$. Since $q$ is a maximal element of $J(D)$, $y_2\leq q$ and $x=y_1\vee q$. For $x=q$, we can let $y_1=0\in C_1$. Hence, $\filter q\subseteq \set{z\vee q:  z\in C_1}$.  Since $C_1$ is a chain, 
so are $\set{z\vee q:  z\in C_1}$ and its subset $\filter q$. 
Finally, $D\cap\filter 0=\emptyset$ follows from $p\nleq q$. The facts established so far prove \eqref{eqpbxflRchN}.  

Observe that \eqref{eqtxttwCnS} implies  \eqref{eqtxtTwCWmnt}. Note that even if $e=f$, we will use the lattice given on the left of Figure~\ref{figrZh}. This will cause no problem since then $f$ can be treated as an alter ego of $e$, similarly to the alter egos $p_1,\dots, p_{k+3}$, see later, of $p$.

In order to verify \eqref{eqtxtCsprsTsskZ}, observe that $\Intv(C_0)
\subseteq \Intv(C)$ implies that the inclusion  $\srep(C_0,\labc_0',D')\subseteq \srep(C,\labc',D')$ holds; see \eqref{eqSrEp} for the notation. To see the converse inclusion, let $I\in \Intv(C)$. 
If $\length(I)\leq 1$, then $\erep(I)\in J_0(D')\subseteq \srep(C_0,\labc_0',D')$ is clear. If $\length(I)\geq 2$, then either  $I\in \Intv(C_0)$ and $\erep(I)\in \srep(C_0,\labc_0',D')$ is obvious, or $I\notin \Intv(C_0)$ and we have that 
$\erep(I)=p\in J_0(D')\subseteq \srep(C_0,\labc_0',D')$. Therefore, \eqref{eqtxtCsprsTsskZ} holds.

\begin{figure}[ht] 
\centerline
{\includegraphics[scale=1.0]{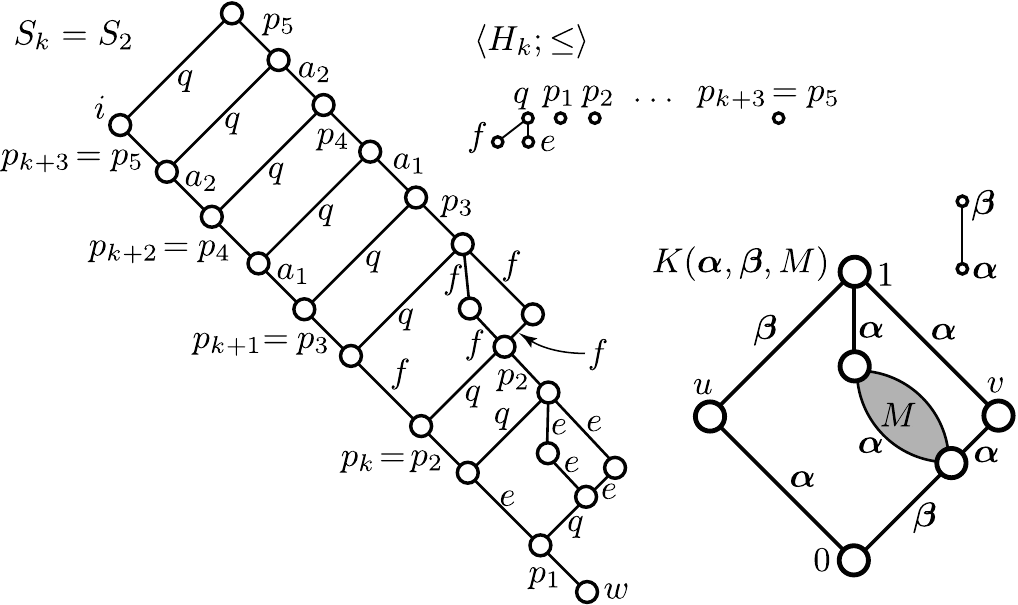}}
\caption{$S_k$ for $k=2$, $\ppair{H_k}{\leq}$, and $K(\balpha,\bbeta,M)$}
\label{figrZh}
\end{figure}

\begin{figure}[ht] 
\centerline
{\includegraphics[scale=1.0]{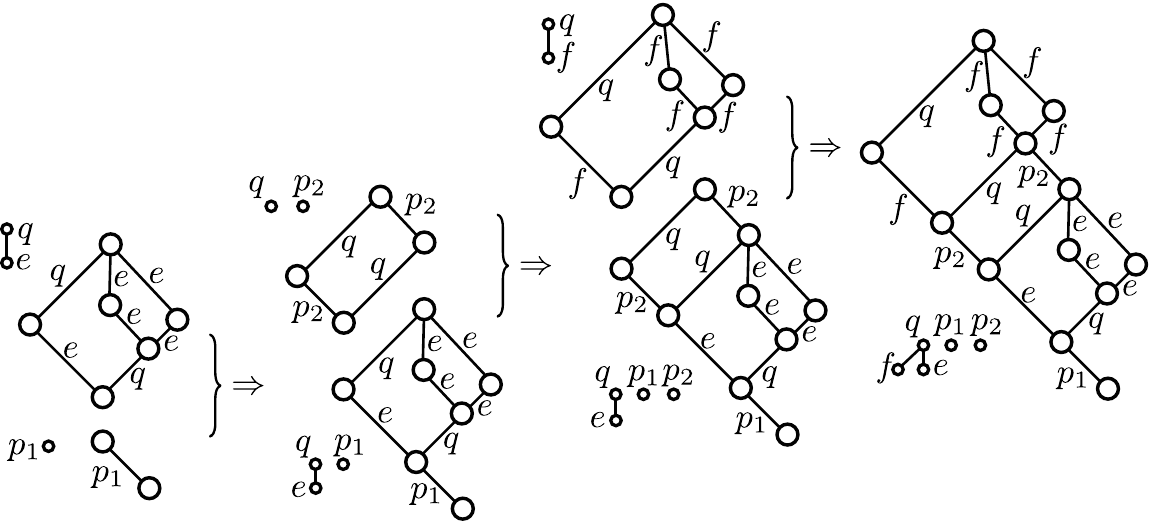}}
\caption{{Elementary steps towards  \eqref{eqtxtgKcLrNg}}}
\label{figgjWh}
\end{figure}

It is straightforward to check that $K(\balpha,\bbeta)$ is colored (not only quasi-colored) by the two-element chain $\set{\balpha<\bbeta}$, as indicated in Figure~\ref{figexone}.  Of course, we can rename the elements of this chain. 
For later reference, let $M$ be a simple lattice, and let $K(\balpha,\bbeta,M)$ denote the colored lattice we obtain from $K(\balpha,\bbeta)$ so that we replace its thick prime interval, see Figure~\ref{figexone}, by $M$ as indicated in Figure~\ref{figrZh}; all edges of $M$ are colored by $\balpha$. 
In Section~ \ref{sectlast}, we will rely on the obvious fact that 
\begin{equation}
\parbox{8cm}{whatever we do with $K(\balpha,\bbeta)$ in this section, we could do it with  $K(\balpha,\bbeta,M_1)$, $K(\balpha,\bbeta,M_2)$, \dots, where $M_1$, $M_2$,\dots{} are finite simple lattices.}
\label{eqpbxshwdZhTc}
\end{equation}

Let $S_k$ be the lattice given in Figure~\ref{figrZh}. Also, this figure defines an ordered set $\ppair{H_k}{\leq}$. Let $\who\gamma_k\colon \Prime(S_k)\to H_k$ be given by the labeling; we claim that 
\begin{equation}
\text{$\who\gamma_k\colon \Prime(S_k)\to \ppair{H_k}{\leq}$ is a coloring; see Figure~\ref{figrZh}.}
\label{eqtxtgKcLrNg}
\end{equation}
Note that the \emph{colors}, see \eqref{eqtxtgKcLrNg}, of the edges of the chain $[w,i]$ of $S_k$ are the same as the \emph{labels}, see \eqref{eqpbxnNghTrZ}, of the edges of the  corresponding filter of $\cast$.
We obtain \eqref{eqtxtgKcLrNg} by applying Lemma~\ref{lemmaHDqcol} repeatedly; the first three steps are given if Figure~\ref{figgjWh}; the rest of the steps are straightforward. In Figure~\ref{figgjWh}, going from left to right, we construct larger and larger quasi-colored (in fact, colored) lattices by Hall-Dilworth gluing. The colors are given by labeling and their ranges by small diagrams in which the elements are given by half-sized little circles. The action of gluing is indicated by ``$\} \Rightarrow$''. Now that we have decomposed the task into elementary steps, we conclude \eqref{eqtxtgKcLrNg}.

Next, let $\natc'\colon \Prime(L')\to J(\Con(L'))$ be the natural coloring of $L'$, that is, for $\inp\in \Prime(L')$, we have that $\natc'(\inp)=\con_{L'}(\inp)$. Since $\phi'\colon\Con(L')\to D'$ is a lattice isomorphism, see after \eqref{eqtxtCsprsTsskZ}, its restriction $\psi':=\restrict{\phi'}{J(\Con(L'))}$ is an order isomorphism from $J(\Con(L'))$ onto $J(D')$. Therefore, the composite map $\gamma_1:= \psi'\circ\natc'$  is a coloring $\gamma_1\colon \Prime(L')\to \ppair{J(D')}{\leq}$.

By \ref{thmggBGrs}\eqref{thmggBGrscb},  $\cast$ is a filter of $L'$, whereby the filter $\filtrof{L'}w$
is a chain. Hence, $L$ is the Hall--Dilworth gluing of $L'$ and $S_k$; compare Figures~\ref{figexone} and \ref{figrZh}. As a preparation to the next application of Lemma~\ref{lemmaHDqcol}, we denote the ordered sets $\ppair{J(D')}{\leq}$ and $\ppair{H_k}{\leq}$ also by $\ppair{J(D')}{\nu_1}$ and $\ppair{H_k}{\nu_2}$, respectively. We let $\gamma_2=\who\gamma_k$; see \eqref{eqtxtgKcLrNg}. 
Finally, $L'$ and $S_k$ will also be denoted by $L_1$ and $L_2$, respectively. 
With these notations, let $\gamma$ be the map defined in \eqref{eqddvcmCsghH}.
On the set $H:=J(D')\cup H_k$, we define a quasiordering $\nu$ according to \eqref{eqnghmnNz}. This means that 
\begin{equation}
\begin{aligned}
\nu=\preogen\bigl(\nu_1\cup \nu_2\cup\{\pair{p}{p_1}, \pair{p_1}{p}, &\pair{p}{p_2}, \pair{p_2}{p},\dots,
\cr
&\pair{p}{p_{k+3}},\pair{p_{k+3}}{p}\}\bigr).
\end{aligned}
\label{eqdhbzGrTb}
\end{equation}
We conclude from  Lemma~\ref{lemmaHDqcol} that 
\begin{equation}
\text{$\gamma\colon \Prime(L)\to \ppair H\nu$ is a quasi-coloring.}
\label{eqtxthcGnG}
\end{equation}
Let $\delta\colon \ppair H\nu\to \ppair{J(D)}{\leq}$ be the (retraction) map defined by
\[
\delta(x)=\begin{cases}
  x,&\text{if }x\in J(D')\cup\set q=J(D),\cr
  p,&\text{if }x\in \set{p_1,p_2,\dots, p_{k+3}}.
\end{cases}
\]
Observe that if $\pair xy\in\nu_1\cup \nu_2$ 
or $x,y\in\set{p,p_1,\dots,p_{k+3}}$, then $\delta(x)\leq \delta(y)$. Hence, the set generating $\nu$ in \eqref{eqdhbzGrTb} is a subset of  $\dker(\delta)$, which implies that $\nu\subseteq \dker(\delta)$  since $\dker(\delta)$ is a quasiordering.
The inclusion just obtained means that $\delta$ is a homomorphism.
In order to verify the converse inclusion, $\dker(\delta)\subseteq \nu$, assume that $x\neq y$ and $\pair xy\in \dker(\delta)$. There are four cases.

First, assume that $x,y\in J(D')$. Then $\pair xy\in \dker(\delta)$ gives that $x=\delta(x)\leq \delta(y)=y$ in $J(D)$. But $J(D')$ is a subposet of $J(D)$, whereby $\pair x y\in\nu_1\subseteq \nu$, as required. 

Second, assume that  $\set{x,y}\cap J(D')=\emptyset$. Then $x,y\in\set{p_1,\dots,p_{k+3},q}$. Since $x\neq y$ and $\delta(x)\leq \delta(y)$, we have that $x,y\in\set{p_1,\dots,p_{k+3}}$, whereby the required containment $\pair xy\in\nu$ is clear by \eqref{eqdhbzGrTb}.

Third, assume that $x\in J(D')$ but $y\notin J(D')$.
If $y\in\set{p_1,\dots,p_{k+3}}$, then the required $\pair xy\in\nu$ follows from  $\pair xp\in \nu_1\subseteq \nu$ and $\pair py\in\nu$. Otherwise, $y=q$, and  $x=\delta(x)\leq \delta(q)=q$ gives that $\pair xe\in\nu_1\subseteq \nu$ or   $\pair xf\in\nu_1\subseteq \nu$. Since  $\pair eq,\pair fq\in\nu_2\subseteq\nu$, the required $\pair xy\in\nu$ follows by transitivity.

Fourth, assume that $x\not\in J(D')$ but $y\in J(D')$. Since $\delta(x)\in\set{p,q}$ and $\delta(x)\leq \delta(y)=y\in J(D')$, the only possibility is that $x\in\set{p_1,\dots, p_{k+3}}$ and $y=p$, which clearly yields the required $\pair xy\in\nu$.  Therefore, $\dker(\delta)\subseteq \nu$.  Thus, it follows from \eqref{eqtxthcGnG} and Lemma~\ref{lMa:wi6p} that the map
\[\whg=\delta\circ \gamma\colon \Prime(L)\to \ppair{J(D)}{\leq}, 
\qquad \text{defined by }\inr\mapsto \delta(\gamma(\inr)),
\]
is a coloring; this coloring is the same what the labeling in Figure~\ref{figexone} suggests.
By Lemma~\ref{lemmawHGhwhg}, the map $\mu\colon \ppair{J(\Con(L))}{\leq}\to \ppair{J(D)}{\leq}$ described in the lemma is an order isomorphism.
By the well-known structure theorem of finite distributive lattices, $\mu$ extends to a unique lattice isomorphism $\phi\colon \Con(L)\to D$. 
We claim that
\begin{equation}
\parbox{11cm}{an element $x$ of $D$ belongs to $\phi(\Princ(L))$ if and only if there is a chain $u_0\prec u_1\prec \dots \prec u_n$ in $L$ such that 
$x=\whg([u_0,u_1])\vee \dots \vee \whg([u_{n-1},u_n])$.}
\label{eqpbxxmcnhGbQl}
\end{equation}
In order to see this, assume that there is such a chain. Then 
\begin{equation}
\begin{aligned}
\phi(\con(u_0,u_n))&= \phi(\con(u_0,u_1)\vee\dots\vee \con(u_{n-1},u_n)) \cr
&=
\phi(\con(u_0,u_1))\vee\dots\vee \phi(\con(u_{n-1},u_n)) \cr
&=
\mu(\con(u_0,u_1))\vee\dots\vee \mu(\con(u_{n-1},u_n)) =\cr
&=\whg([u_0,u_1])\vee \dots \vee \whg([u_{n-1},u_n])=x,
\end{aligned}
\label{eqhdnGrTzmH}
\end{equation}
whereby $x\in \phi(\Princ(L))$. Conversely, assume that $x\in \phi(\Princ(L))$, that is, $x=\phi(\con(a,b))$ for some $a\leq b\in L$. Pick a maximal chain $a=u_0\prec u_1\prec \dots \prec u_n=b$ in the interval $[a,b]$, then \eqref{eqhdnGrTzmH} shows that $x$ is of the required form. This proves the validity of \eqref{eqpbxxmcnhGbQl}. 

We say that a  \emph{\eqref{eqpbxxmcnhGbQl}-chain} $u_0\prec u_1\prec \dots \prec u_n$ \emph{produces} $x$ if the equality in  \eqref{eqpbxxmcnhGbQl} holds. In order to show that $Q=\phi(\Princ(L))$, we need to show that $x\in D$ is produced by a \eqref{eqpbxxmcnhGbQl}-chain iff $x\in Q$. 
It suffices to consider join-reducible elements  and chains of length at least two, because chains of length 1 produce join-irreducible element that are necessarily in $Q$. 
First, assume that $x\in Q$. If $x\in Q'=Q\cap\ideal p$, then the choice of $D'$ and the validity of  \eqref{eqpbxxmcnhGbQl} for $D'$ and $L'$ yield a  \eqref{eqpbxxmcnhGbQl}-chain producing $x$.
Otherwise, $x$ is of the form $x=a_i\vee q$, and we can find a length two chain in $S_k\subseteq L$ that produces $x$.

Second, assume that $x$ is produced by a \eqref{eqpbxxmcnhGbQl}-chain $W$ of length at least 2. If $W$ has a $p$-colored edge, then $\filter p\subseteq Q$ implies that $x\in Q$. Hence, we can assume that no edge of $W$ is colored by $p$.  If $W$ is a chain in $L'$, then the choice of $L'$ guarantees that $x\in Q'\subseteq Q$. In $S_k$, any two edges of distinct colors not in $\set{p,q}$  are separated by a $p$-colored or $q$-colored edge. Hence, if $W$ is a chain in $S_k$, then $x$ is one of the elements   $e\vee q=q$,  $f\vee q=q$, and $a_i\vee q$ for $i=1,\dots k+3$, and these elements belong to $Q$. We are left with the case where $W$ is neither in $L'$, nor in $S_k$. Since $L$ is a Hall-Dilworth gluing of $L'$ and $S_k$, it follows that $W$ has an edge $[u_{j-1},u_j]$ such that $u_{j-1}\in L'\setminus \cast$ but $u_j\in \cast$.
In Figure~\ref{figexone}, $[u_{j-1},u_j]$ is one of the dashed lines.  By Theorem~\ref{thmggBGrs}
\eqref{thmggBGrscd}, $\whg([u_{j-1},u_j])=p$, but 
we have assumed that $W$ cannot have such an edge.
Hence, $x\in Q$ for every $W$.  Consequently, $Q=\phi(\Princ(L))$. 
This completes the proof of the  \eqref{thmmainc} $\Rightarrow$  \eqref{thmmaina} part of Theorem~\ref{thmmain}.
\end{proof}

\section{A new approach to Gr\"atzer's Theorem~\ref{thmggBGrs}}\label{secnewproof}
Our approach includes a lot of ingredients from  Gr\"atzer~\cite{ggwith1}.

\begin{proof}[Proof of the implication \textup{\ref{thmggBGrs}\eqref{thmggBGrsb} $\Rightarrow$ \ref{thmggBGrs}\eqref{thmggBGrsc}}]
Let $Q$ and $D$ be as in Theorem~\ref{thmggBGrs}\eqref{thmggBGrsb}; see Figure~\ref{figkTr}, where $Q$ consists of the grey-filled elements. 
The largest elements of $D$ will be denoted by $\pe$, it belongs to $J(D)$.
Let $\tripl C\labc D$ be a $J(D)$-labeled chain representing $Q$. We need to find an lattice $L$ and an isomorphism $\phi\colon\Con(L)\to D$ that satisfy the requirements of  \ref{thmggBGrs}\eqref{thmggBGrsc}.

\begin{figure}[ht] 
\centerline
{\includegraphics[scale=1.0]{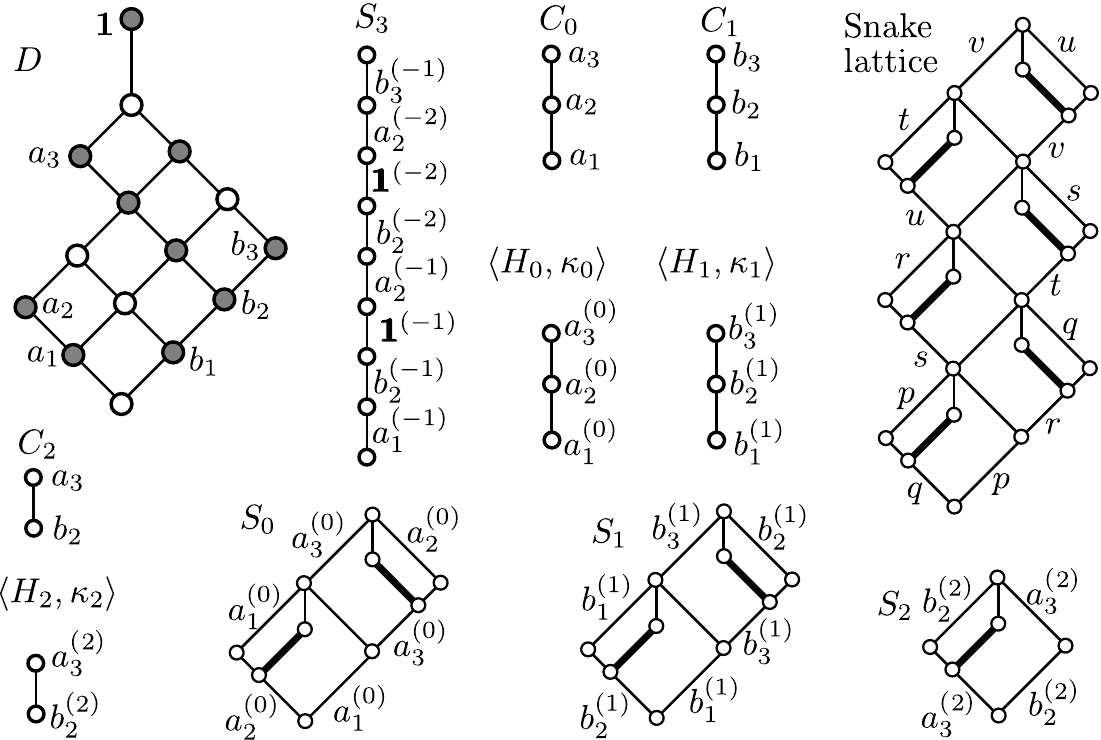}}
\caption{{An example for $Q\subseteq D$ and the first steps towards its representation}}
\label{figkTr}
\end{figure}

The ordering of $J(D)$ will often be denoted by $\kappa'$. Take a list $\ppair{C_0}{\kappa'_0}$, \dots, $\ppair{C_{t-1}}{\kappa'_{t-1}}$ of chains in $J(D)$; here $\kappa'_i$ denotes the restriction  of $\kappa'$ to $C_i$, for $i<t$. 
Assume that this list of chains is taken so that 
$J(D)\setminus\set{1_D}=\bigcup_{i<t} C_i$, 
and 
\begin{equation}
\preogen\bigl((J(D)\times\set {\pe})\cup \bigcup_{i<t} \kappa'_i\bigr)=\kappa'    
,\quad\text{that is, }
(J(D)\times\set {\pe})\vee \bigvee_{i<t} \kappa'_i=\kappa'
\label{eqchmBkappa}
\end{equation}
in $\Quo(J(D))$.
Although we can always take the list of all chains $\set{a,b}$ with $a\prec_{J(D)} b$, 
we get a much smaller lattice $L$ by selecting 
fewer chains. For $D$ given in Figure~\ref{figkTr}, we can let $t=3$, $C_0=\set{a_1<a_2<a_3}$, $C_1=\set{b_1<b_2<b_3}$, and $C_2=\set{b_2<a_3}$.
For each of the $C_i=\set{x_1<x_2<\dots<x_{m_i}}=\ppair{C_i}{\kappa'_i}$ such that $m_i>1$,
let $H_i=\set{x_1^{(i)}<x_2^{(i)}<\dots<x_{m_i}^{(i)}}=\ppair{H_i}{\kappa_i}$ be an alter ego of $C_i$; see Figure~\ref{figkTr} again. 
Each of the $\ppair{H_i}{\kappa_i}$, for $i<t$, determines a \emph{snake lattice}, which is  obtained by gluing copies of $K(\balpha,\bbeta)$ such that there is a coloring from the set of prime intervals of the snake lattice onto  $\ppair{H_i}{\kappa_i}$. For example, if 
$\ppair{H_i}{\kappa_i}$ had been  $\set{p<q<r<s<t<u<v}$, then the snake lattice would have been the one given on the right of Figure~\ref{figkTr}.
For our example, this snake lattice is not needed; what we need for our $D$ is $S_i$ and the coloring 
$\sigma_i\colon\Prime(S_i)\to \ppair{H_i}{\kappa_i}$, indicated by labels in the figure, for $i<t$. 
(By space considerations, not all edges are labeled.)
The purpose of the alter egos is to make our chains $H_i$  pairwise disjoint.
If $m_i=1$, then $S_i$ is the two-element lattice and the coloring $\sigma_i$ is the unique map from the singleton $\Prime(S_i)$ to  $\ppair{H_i}{\kappa_i} :=\ppair{\set{x_1^{i}}}{\kappa_i}$, 
where $\kappa_i$ is the only ordering on the singleton set $H_i$.

\begin{figure}[ht] 
\centerline
{\includegraphics[scale=1.0]{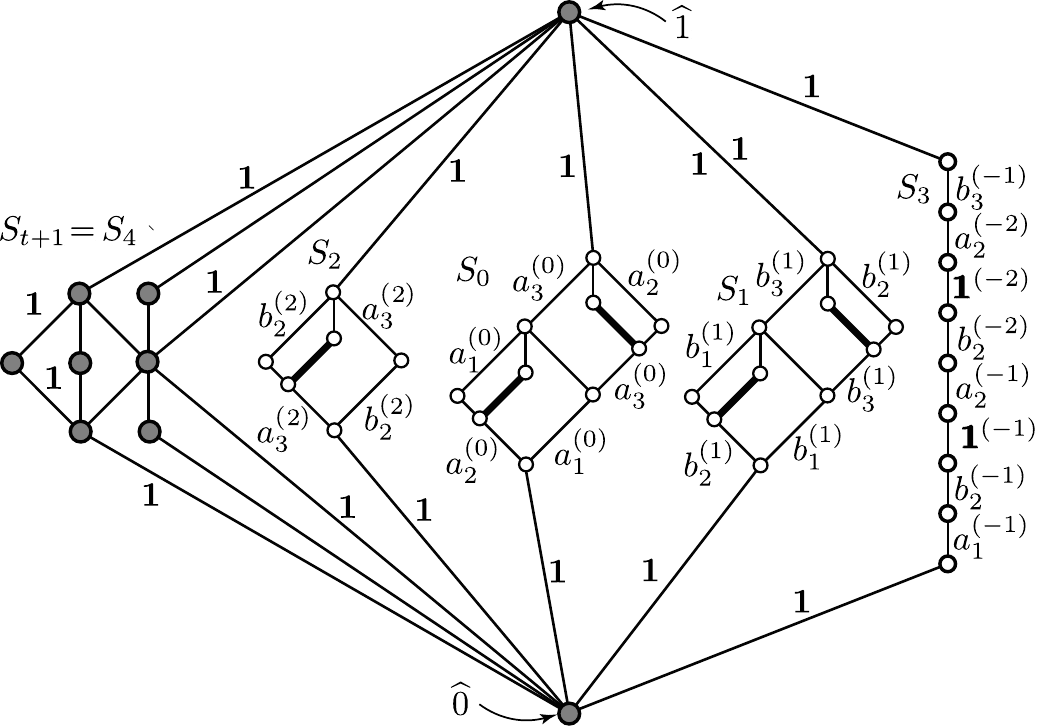}}
\caption{{The frame lattice $F$ for the example given in Figure~\ref{figkTr}}}
\label{figHmwTs}
\end{figure}

Next, we turn $\tripl C\labc D$ to a colored lattice $S_{t}$ as follows. Before its formal definition, note that in case of our example, a possible $S_t=S_3$ is given in Figure~\ref{figkTr}. 
As a lattice, $S_{t}:=C$. 
For $x\in J(D)$, let $h(x)=|\set{\inp\in\Prime(C): \labc(\inp)=x}|$; note that $h(x)\geq 1$. Let 
\[H_{t}:=\bigcup_{x\in J(D)}\set{x^{(-1)}, \dots, x^{(-h(x))}}\quad\text{ and }\quad \kappa_{t}:=\set{\pair yy: y\in H_{t}};
\]
then $\ppair{H_{t}}{\kappa_{t}}$ is an antichain, which is not given in the figure. Define the map
\begin{equation}
\parbox{10.2cm}
{$\sigma_{t}\colon\Prime(S_{t})\to H_{t}$  by the rule $\inp\mapsto x^{(-i)}$ iff $\labc(\inp)=x$ and, counting from below, $\inp$ is the $i$-th edge of $C$ labeled by $x$.}
\label{eqpbxzBfgjTkNqh}
\end{equation}
Less formally, we make the labels of $S_{t}=C$ pairwise distinct by using negative superscripts; 
see Figure~\ref{figkTr}. These new labels form an antichain $H_{t}$, and the new labeling $\sigma_{t}$ becomes a coloring.

The colored lattices $S_i$, $i\leq t$, with their colorings $\sigma_i\colon \Prime(S_i)\to \ppair{H_i}{\kappa_i}$ will be referred to under the common name \emph{branches}.
So the $i$-th branch is a snake lattice or the two-element lattice for $i<t$, and it is  $S_{t}$ with the coloring given in \eqref{eqpbxzBfgjTkNqh} for $i=t$.

\begin{figure}[ht] 
\centerline
{\includegraphics[scale=1.0]{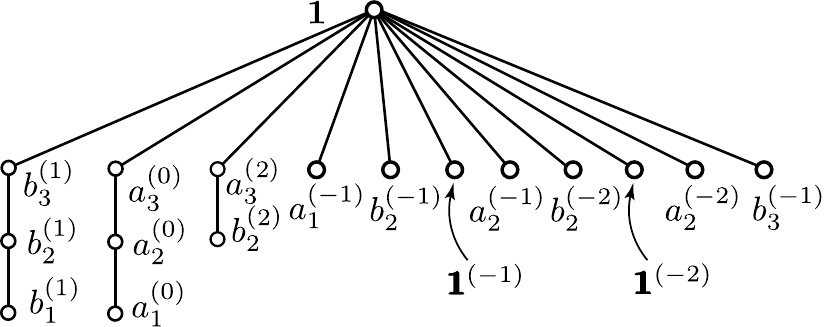}}
\caption{{$\ppair H\kappa$}}
\label{figwTQfl}
\end{figure}

Next, let  $\ppair{H_{t+1}}{\kappa_{t+1}}$
be the one-element ordered set $H_{t+1}=\set{\pe}$, and let $S_{t+1}$ be an arbitrary simple lattice with $|S_{t+1}|\geq 3$. In Figure~\ref{figHmwTs},  $S_{t+1}$ consists of the elements given by a bit larger and  grey-filled circles. We have chosen this simple lattice because it is easy to draw.  
The zero and unit of $S_{t+1}$ will be denoted by $\latn$ and $\late$, respectively.
The unique map  $\Prime(S_{t+1})\to \ppair{H_{t+1}}{\kappa_{t+1}}$ will be  denoted  by $\kappa_{t+1}$; it is a coloring.  All the lattices and ordered sets mentioned so far in this section are assumed to be pairwise disjoint. Let $F$ be the lattice we obtain from $S_{t+1}$ by inserting all the $S_i$ for $i\leq t$ as intervals such that for every $i,j\in\set{0,\dots,t}$, $x_i\in S_i$, $x_j\in S_j$ and $y_{t+1}\in S_{t+1}\setminus\set{\latn,\late}$, we have that $x_i\vee y_{t+1}=\late$, $x_i\wedge y_{t+1}=\latn$, and if $i\neq j$, then we also have that  $x_i\vee x_j=\late$ and  
$x_i\wedge x_j=\latn$; see Figure~\ref{figHmwTs}, which gives $F$ for our example of $Q\subseteq D$ given in Figure~\ref{figkTr}. Following Gr\"atzer~\cite{ggwith1}, we call $F$ the \emph{frame} or the \emph{frame lattice} associated with $Q\subseteq D$, but note that it depends also on the list of chains and the choice of $S_{t+1}$. The simplicity of $S_{t+1}$ guarantees that $F$ is a $\set{\latn,\late}$-separating lattice. In order to see this, let $x\in L\setminus\set{\latn,\late}$. If $x\in S_{t+1}$, then $\con(\latn,x)=\con(x,\late)=1_{\Con(F)}$ by the simplicity of $F$. If $x\notin S_{t+1}$, then $x$ has a complement $y$ in $S_{t+1}$, and 
$\con(\latn,x)=\con(x,\late)=1_{\Con(F)}$ since the same holds for $y$. 
Let
\begin{equation}
\begin{aligned}
H:=\bigcup_{i<t+2}H_i,\quad
\kappa:=(H\times \set 1)\cup \bigcup_{i<t+2}\kappa_i,\quad
\text{ and define }\cr
\sigma(\inp)=
\begin{cases}
\sigma_i(\inp),&\text{if }\inp\in\Prime(S_0)\cup\dots\cup\Prime(S_t),\cr
\pe,&\text{otherwise;}
\end{cases}
\end{aligned}
\label{eqksRzgThmb}
\end{equation}
in this way, we have defined a map
$\sigma\colon \Prime(F)\to \ppair H\kappa$.
In case of our example, $\sigma$ and $\ppair H\kappa$ are given by  Figures~\ref{figHmwTs} and \ref{figwTQfl}, respectively. Clearly, $\kappa$ is an ordering.
Using that $F$ is $\set{\latn,\late}$-separating and arguing similarly to  Gr\"atzer~\cite{ggwith1}, it is easy to see that $\sigma$ is a coloring.

\begin{figure}[ht] 
\centerline
{\includegraphics[scale=1.0]{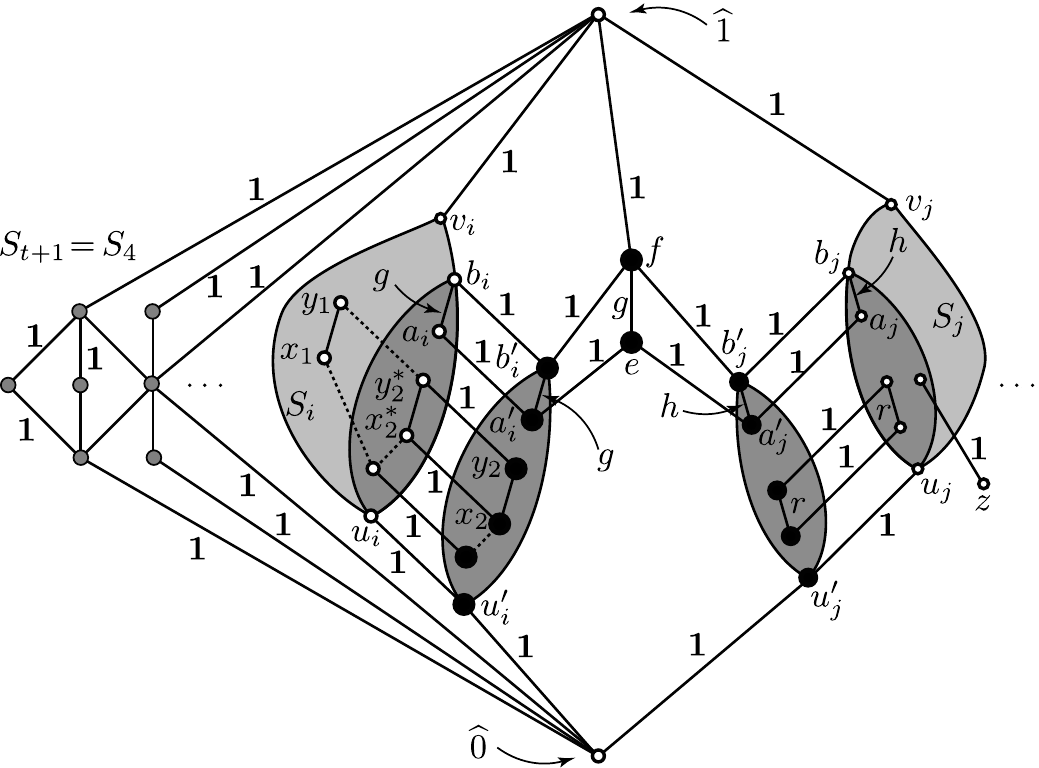}}
\caption{{Equalizing the colors $g$ and $h$}}
\label{figsizQmB}
\end{figure}

%
%

Each element of $J(D)\setminus\set{\pe}$ has at least one alter ego in $H$, but generally it has  many alter egos; they differ only in their notations and they belong to distinct branches. Note that $\pe$ also has alter egos, usually many alter egos, in $S_t$. It is neither necessary, nor forbidden that $\pe$ has alter egos in $S_0\cup\dots\cup S_{t-1}$. 
Next, let 
\begin{equation}
\begin{aligned}
\erel =\{\pair {g_0}{h_0}, 
\pair {h_0}{g_0}, 
&\pair {g_1}{h_1},
\pair {h_1}{g_1},\dots, \cr
&\pair { g_{m-1} }{h_{m-1}}, 
\pair { h_{m-1} }{g_{m-1}}   \} \subseteq H^2 
\end{aligned}
\end{equation}
be a symmetric relation such that the equivalence relation generated by $\erel$ is the least equivalence on $H$ that collapses every element with all of its alter egos. Every ``original color'' $x$ (that is, every  $x\in J(D)$) has an alter ego in $S_t$ and also in   some of the $S_{i_0}$, $i_0<t$. Since $x$ has only one alter ego in $S_{i_0}$, we can assume that
\begin{equation}
\parbox{9.5cm}{for every $\pair{g_\ell}{h_\ell}\in\erel$, there are \emph{distinct} branches $S_i$ and $S_j$ such that $S_i$ contains an edge $\inp_i$ with $\sigma(\inp_i)=\sigma_i(\inp_i)=g_\ell$ and $S_j$ contains an edge $\inp_j$ with $\sigma(\inp_j)=\sigma_j(\inp_j)=h_\ell$.}
\label{eqpbxhHbgbrghH}
\end{equation} 
Note that the smaller the $\erel$ is, the smaller the lattice $L$ will be. Since $\erel$ is symmetric, the equivalence it generates is $\preogen(\erel)$. 
Let
\begin{equation}\eta:=\preogen(\kappa\cup \erel);
\label{eqsgtZhr}
\end{equation} 
we claim that for every $x,y\in J(D)$,
\begin{equation}
\parbox{8cm}{$x\leq y$ in $J(D)$ iff there exists alter egos $x'$ and $y'$ of $x$ and $y$, respectively, such that $x'\leq_\eta y'$.}
\label{eqpbxhGtbPbT}
\end{equation}
In order to see this, assume that $x\leq y$ in $J(D)$, that is, $x\leq_{\kappa'} y$.  We can assume that $y\neq \pe$, because otherwise
$\pair x y\in  \kappa$ by \eqref{eqksRzgThmb}, whereby \eqref{eqsgtZhr} gives that 
 $\pair x y\in \eta$. By \eqref{eqchmBkappa}, there is a sequence $x=z_0, z_1,\dots, z_n=y$ in $J(D)\setminus\set\pe$ such that $\pair{z_{j-1}}{z_j}\in \bigcup_{i<t}\kappa'_i$ for all $j<n$. Denoting the corresponding alter egos by $z_j'$, we obtain by  \eqref{eqksRzgThmb} and \eqref{eqsgtZhr} that $\pair{z'_{j-1}}{z'_j}\in \bigcup_{i<t}\kappa_i\subseteq \kappa\subseteq \eta$ for all $j$, whereby $\pair{x'}{y'}=\pair{z'_0}{z'_n}\in\eta$ by transitivity. Conversely, assume that 
$x'\leq_\eta y'$ for alter egos of $x$ and $y$, respectively. 
Again, we can assume that $y\neq\pe$. It suffices to deal with the particular case $x'\leq_{\kappa_i} y'$, because the  case 
$x'\leq_{\epsilon} y'$ causes no problem and the general case follows from the particular one 
by \eqref{eqksRzgThmb}, \eqref{eqsgtZhr}, and  transitivity. But   $x'\leq_{\kappa_i} y'$ means that $x$ and $y$ belong to the same chain $C_i$ and $x\leq y$ in this chain. Hence, $x\leq y$ in $J(D)$, as required. Therefore, \eqref{eqpbxhGtbPbT} holds.

Next, we explain where the rest of the proof and that of the construction go. 
Let $\delta\colon \ppair H\eta\to \ppair{J(D)}{\leq}$, defined by $\delta(x)=y$ iff $x$ is an alter ego of $y$. Assume that we can find a lattice $L$ and a map $\gamma$ such that 
\begin{equation}
\gamma\colon\Prime(L)\to \ppair H\eta \text{ is a coloring.}
\label{eqtxtCsPdzHtMmM}
\end{equation}
Then, since $\dker{\delta}=\eta$ by  \eqref{eqpbxhGtbPbT} and  $\ppair{J(D)}{\leq}$ is an ordered set, not just a quasiordered one, it will follow\footnote{Note that the equality $\dker{\delta}=\eta$ implies that $\delta$ is a homomorphism.} from Lemma~\ref{lMa:wi6p} that, with the notation $\what\gamma:= \delta\circ\gamma$, the map
$\what\gamma\colon \Prime(L) \to \ppair{J(D)}{\leq}$ is a coloring. In the next step, it will turn out by Lemma~\ref{lemmawHGhwhg} that 
\begin{equation}
\parbox{7cm}
{$\mu\colon \ppair{J(\Con(L))}{\leq}  \to \ppair{J(D)}{\leq}$, defined by   $\con(\inp)\mapsto \what\gamma(\inp)$, in an order isomorphism.}
\end{equation}
Furthermore, \eqref{eqpbxxmcnhGbQl} will be valid for $L$ by the same reason as in Section~\ref{sectdrve}. At present, by the choice of $S_t$ and the definition of $F$, 
\begin{equation}
\parbox{9.5cm}{the elements $x\in D$ that are of the form described in  \eqref{eqpbxxmcnhGbQl}, with $F$ instead of $L$, are exactly the elements of $Q$.}
\label{eqpbxzhTrxTlSM}
\end{equation}
For $\ell\in\set{0,1,\dots, m}$, let $\erel_\ell=\set{\pair {g_j}{h_j}: j<\ell}\cup \set{\pair {h_j}{g_j}: j<\ell}$, and let 
\begin{equation}
\eta_\ell=\preogen(\kappa\cup \erel_\ell).
\label{eqGrtdK}
\end{equation} 
By \eqref{eqsgtZhr},  $\eta_0=\kappa$, $\erel_m=\erel$, and $\eta_{m}=\eta$. By induction, we intend to find lattices $L_0=F$, $L_1$, \dots , $L_m$ and quasi-colorings 
\begin{equation}
\parbox{7.5cm}
{$\gamma_0=\sigma\colon\Prime(L_0)\to \ppair{H}{\eta_0}$ and, for $\ell$ in $\set{1,\dots,m}$, $\gamma_\ell\colon\Prime(L_\ell)\to \ppair{H}{\eta_\ell}$ so that the elements described in \eqref{eqpbxxmcnhGbQl} remain the same,}
\label{eqmZrRschHdW}
\end{equation}
that is, \eqref{eqpbxzhTrxTlSM} remains valid. 
Note that $\gamma_{\ell}$ will extend $\gamma_{\ell-1}$, for $\ell\in\set{1,\dots,m}$.
Since $\gamma_0=\sigma$ and $L_0:=F$ satisfy the requirements, it suffices to deal with the transition from $L_{\ell -1}$ to $L_\ell $, for $1\leq \ell \leq m$. 

So we assume that 
$\gamma_{\ell -1}\colon \Prime(L_{\ell -1})\to \ppair{H}{\eta_{\ell -1}}$ satisfies the requirements formulated in  \eqref{eqmZrRschHdW}. 
In order to ease the notation in Figure~\ref{figsizQmB}, we denote $\pair{g_{\ell -1}}{h_{\ell -1}}$ by $\pair gh$. Then, as it is clear from \eqref{eqGrtdK}, $\eta_\ell =\preogen(\eta_{\ell -1}\cup\set{\pair{g}{h}}\cup\set{\pair{h}{g}})$. Hence, we shall add an ``equalizing flag'' $W$ to $L_{\ell-1}$ such that this flag forces that the congruence generated by a $g$-colored edge 
be equal to the congruence generated by an $h$-colored edge. The term ``flag'' and its usage is taken from Gr\"atzer~\cite{ggwith1}. Apart from terminological differences, the argument about our flag  is the same\footnote{Our $[u_i,b_i]$ in Figure~\ref{figsizQmB} is a chain in Gr\"atzer~\cite{ggwith1}, but this fact is not exploited there.} as that in Gr\"atzer~\cite{ggwith1}. By \eqref{eqpbxhHbgbrghH}, there are distinct $i,j\in\set{0,1,\dots, t}$
such that we can pick the $g$-colored edge and the $h$-colored edge mentioned above 
from branches $S_i$ and $S_j$, respectively; see Figure~\ref{figsizQmB}. 
Since the role of $g$ and $h$ is symmetric, we can assume that $i<j$. 

In Figure~\ref{figsizQmB}, the flag consists of the large black-filled elements. In order to describe the flag more precisely, let $S_i=[u_i,v_i]$ in $L_{\ell-1}$, and let $[a_i,b_i]\in\Prime(S_j)$ with $\gamma_{\ell-1}([a_i,b_i])=g$. Take the direct product of the dark-grey interval $[u_i,b_i]$ and the two-element chain $\chc 2$; this is the dark-grey interval $[u_i',b_i]$ in the figure. 
Then for every $x\in [u_i,b_i]$ there corresponds a unique element $x'\in [u'_i,b'_i]$; namely, we obtain $x'$ from $x$ by changing the ``$\chc 2$-coordinate'' of $x$ from $1_{\chc 2}$ to 
$0_{\chc 2}$. 
Then form the Hall--Dilworth gluing of the direct product and $S_i=[u_i,v_i]$ to obtain the interval $[u_i',v_i]$. Then do exactly the same with $j$ instead of $i$; see on the right of Figure~\ref{figsizQmB}.  Finally, add two more elements, $e$ as $a_i'\vee a_j'$ and $f$ as $b_i'\vee b_j'$, as shown in the figure. The lattice we obtain is $L_\ell$.  Note that Figure~\ref{figsizQmB} contains only a part of $L_\ell$; there are more branches in general (indicated by three dots in the figure) and there can be earlier flags with many additional element; one of these elements is indicated by $z$ on the right of the figure. 
We extend $\gamma_{\ell-1}$ to a map $\gamma_\ell\colon \Prime(L_\ell)\to \ppair H{\eta_\ell}$ as indicated by the figure. In particular, if
$[x,y]\in\Prime([u_i,b_i])\cup \Prime([u_j,b_j])$, then $\gamma_\ell([x',y']):= \gamma_{\ell-1}([x,y])$. We let $\gamma_\ell([e,f]):=g$ to make the definition of $\gamma_\ell$ unique\footnote{Remember, $i<j$ and the $g$-colored edge $[a_i,b_i]$ is in $\Prime(S_i)$.}, but note that $\gamma_\ell([e,f]):=h$ would also work.  
Clearly, $L_\ell$ satisfies 
\ref{thmggBGrs}\eqref{thmggBGrscb} and 
\ref{thmggBGrs}\eqref{thmggBGrscd} since so does $L_{\ell-1}$ by the induction hypothesis.

Therefore, as indicated earlier, it suffices to show that $\gamma_\ell$ is a quasi-coloring. But now this is almost trivial by the following reasons.

First, whenever we have a quasi-coloring of a lattice $U$, then it is straightforward to extend it to $U\times \chc2$: the edges $[x,y]$ and $[x',y']$ have the same color while all the $[x',x]$ edges have the same additional color. 
Since $L_{\ell-1}$ and $L_{\ell}$ are $\set{\latn,\late}$-separating, now the $[x',x]$ edges are $\pe$-colored. Apart from $e$ and $f$, which are so much separated from the rest of $L_\ell$ that they cannot cause any difficulty, we obtain the flag by two applications of the Hall--Dilworth gluing construction. Hence, the argument given for Lemma~\ref{lemmaHDqcol} works here with few and straightforward changes. Only the most important  changes and cases are discussed here; namely, the following two.

First, assume that $[x_1,y_1]\in \Prime(L_{\ell-1})$, 
$[x_2,y_2]\in \Prime(L_{\ell})\setminus \Prime(L_{\ell-1})$, and  $[x_1,y_1]\ppdn [x_2,y_2]$; see Figure~\ref{figsizQmB}. We need to show that $\gamma_\ell([x_1,y_1])\geq_{\eta_\ell} \gamma_\ell([x_2,y_2])$. 
The zigzag structure of the flag implies that $\set{x_2,y_2}$ is disjoint from $\set{e,f}$, and it follows that  
$\set{x_2,y_2}\subseteq [u_i',b_i']$  or $\set{x_2,y_2}\subseteq [u_j',b_j']$. So we can assume that 
$\set{x_2,y_2}\subseteq [u_i',b_i']$. Using that we have a Hall--Dilworth gluing (in the filter $\filter{u_i'}$ of $L_\ell$) and $[u_i',b_i]\cong [u_i,b_i]\times \chc2$, we obtain a unique $[x_2^\ast,y_2^\ast]\in\Prime([u_i,b_i])$ such that $(x_2^\ast)'=x_2$ and $(y_2^\ast)'=y_2$. Since $\gamma_{\ell-1}$ is a quasi-coloring by the induction hypothesis, we obtain that 
\begin{align*}
\gamma_\ell([x_1,y_1]) = \gamma_{\ell-1}([x_1,y_1])
\geq_{\eta_{\ell-1}}\gamma_{\ell-1}([x_2^\ast,y_2^\ast])= \gamma_{\ell}([x_2,y_2].
\end{align*}
Since $\eta_{\ell-1}\subseteq\eta_\ell$, this implies the required $\gamma_\ell([x_1,y_1])\geq_{\eta_\ell} \gamma_\ell([x_2,y_2])$. 

Second, assume that $[x_1,y_1], [x_2,y_2]\in \Prime(L_{\ell})\setminus \Prime(L_{\ell-1})$, and  $[x_1,y_1]\ppdn [x_2,y_2]$. If none of 
$\gamma_\ell([x_1,y_1])$ and $\gamma_\ell([x_2,y_2])$ is $\pe$, then 
 $\set{\gamma_\ell([x_1,y_1]), \gamma_\ell([x_2,y_2])}\subseteq\set{g,h}$, and $\pair{\gamma_\ell([x_1,y_1])}{\gamma_\ell([x_2,y_2])}\in \erel_\ell\subseteq\eta_\ell^{-1}$, as required. Otherwise, both $\gamma_\ell([x_1,y_1])$ and $\gamma_\ell([x_2,y_2])$ equal $\pe$, and we are ready by reflexivity.
\end{proof}

%

%
%
%
%
%
%
%
%
%
%

\section{Taking care of $\Aut(L)$}\label{sectlast}
A lattice $M$ is \emph{automorphism-rigid} if $|\Aut(M)|=1$. It is well-known from several sources that 
\begin{equation}
\parbox{9cm}{there exists an infinite set
$\set{M_1,M_2,M_3,M_4,\dots}$
 of pairwise non-isomorphic, automorphism-rigid finite lattices;}
\label{eqpbxHtzcfrZmb}
\end{equation}
see, for example,
 Cz\'edli~\cite[Lemma 2.8]{czgprincout}, 
Cz\'edli and Mar\'oti~\cite{czgmaroti},
Freese~\cite{freese}, Gr\"atzer \cite{gGprincIII},
Gr\"atzer and Quackenbush~\cite{grqbush}, and
Gr\"atzer and Sichler~\cite{gGsichler}. 

The easiest way\footnote{Note that there is a more involved way: if an automorphism swaps two distinct snake lattices, then it has to swap two prime intervals of $S_t$ with which these snakes are ``equalized'', but this is impossible.} to convince ourselves that the lattice $L$ constructed in the preceding sections can be chosen to be automorphism-rigid is to replace the ``thick'' prime intervals $\inp_1$, $\inp_2$, $\inp_3$, \dots in $L$ by $M_1$, $M_2$, $M_3$, \dots from \eqref{eqpbxHtzcfrZmb}, respectively, so that every edge of $M_i$ inherits the color of $\inp_i$.
This is why we have made \eqref{eqpbxshwdZhTc} a reference point. 

As a particular case of the simultaneous representability of a finite distributive non-singleton lattice $D$ and a finite group $G$ with a finite lattice $L$ in the sense that $D\cong\con(L)$ and $G\cong \Aut(L)$, it is also known that 
\begin{equation}
\parbox{7cm}{for every finite group $G$, there exists a finite simple lattice $M_0$ such that $G\cong\Aut(M_0)$.}
\label{eqpbxBrnskJ}
\end{equation}
The above-mentioned simultaneous representability is due to  Baranski\u\i~\cite{baransk} and Urquhart~\cite{urqu}; see also Gr\"atzer and Schmidt~\cite{grSch-strongindep} and Gr\"atzer and Wehrung~\cite{gGwehr} for even stronger results.
Now it is clear how to modify our constructions to complete the proofs. 

\begin{proof}[Completing the proof of Theorem~\ref{thmmyscnD}] First, do the same as in Section~\ref{secnewproof} but we have to choose $S_{t+1}$ from the list \eqref{eqpbxHtzcfrZmb}; for example, let 
$S_{t+1}=M_1$.  Then replace the ``thick'' prime intervals $\inp_1$, $\inp_2$, $\inp_3$, \dots in $L$ by $M_0$ from \eqref{eqpbxBrnskJ} and  $M_2$, $M_3$, \dots from \eqref{eqpbxHtzcfrZmb}, respectively. It follows from \eqref{eqpbxshwdZhTc} that this method works.
\end{proof}

\begin{proof}[Completing the proof of Theorem~\ref{thmmain}] In the construction described in Section~\ref{sectprbpct} and verified in Section~\ref{sectdrve}, now we shall use Theorem~\ref{thmmyscnD} rather than Theorem~\ref{thmggBGrs} to obtain $L'$. 
So let $D'=\ideal p$ as before. Since $|D'|>1$, we can choose an $L'$ that satisfies the requirements of Theorem~\ref{thmmyscnD}. In particular, $\Aut(L')\cong G$. The construction of $L'$ used some of the lattices listed in \eqref{eqpbxHtzcfrZmb};
let $i$ be the smallest subscript such that none of $M_i$ and $M_{i+1}$ was  used. 

Next, armed with $L'$,  construct $L$ as before; see Figure~\ref{figexone}. However, $L$ has two automorphisms that we do not want (and, usually, many others obtained by composition): one of these two automorphisms interchanges the two doubly irreducible elements that are the bottoms of $e$-colored edges, while the other one does the same with $f$ instead of $e$. To get rid of these unwanted automorphisms, \eqref{eqpbxshwdZhTc} allows us to replace the $e$-colored thick edge and the $f$-colored \emph{thick} edge in Figure~\ref{figexone} by $M_i$ and $M_{i+1}$, respectively. The new lattice we obtain in this way, which is also denoted by $L$ from now on, has only those automorphisms that are extensions of automorphisms of $L'$. Hence, $\Aut(L)\cong\Aut(L')\cong G$, as required.
\end{proof}

\end{document}